\documentclass[11pt]{article}
\usepackage{amsfonts,graphicx}
\usepackage{amsmath,amssymb}
\usepackage{amsbsy,amsmath}

\def\R{\mathbb{R}}

\def\N{\mathbb{N}}

\def\cA{\mathcal{A}}

\def\cD{\mathcal{D}}

\def\cE{\mathcal{E}}

\def\cP{\mathcal{P}}

\def\aa{{\boldsymbol{a}}}

\def\cc{\boldsymbol{c}}

\def\ee{\boldsymbol{e}}

\def\ii{\boldsymbol{i}}
\def\jj{\boldsymbol{j}}

\def\eell{\boldsymbol{\ell}}

\def\pp{\boldsymbol{p}}

\def\xx{\boldsymbol{x}}
\def\yy{\boldsymbol{y}}

\def\NN{\boldsymbol{N}}

\def\00{\boldsymbol{0}}
\def\11{\boldsymbol{1}}
\def\22{\boldsymbol{2}}
\def\33{\boldsymbol{3}}
\def\44{\boldsymbol{4}}
\def\55{\boldsymbol{5}}
\def\66{\boldsymbol{6}}
\def\77{\boldsymbol{7}}
\def\88{\boldsymbol{8}}
\def\99{\boldsymbol{9}}

\def\aalpha{{\boldsymbol{\alpha}}}

\def\ggamma{{\boldsymbol{\gamma}}}
\def\ddelta{{\boldsymbol{\delta}}}

\def\mfu{{{\mathfrak{u}}}}

\def\mfv{{{\mathfrak{v}}}}

\DeclareMathOperator*{\argmin}{arg\,min}

\DeclareMathOperator{\vol}{vol}

\def\mix{{\qopname\relax o{mix}}}

\def\mybigtimes{\mathop{\mathchoice{
   \vcenter{\hbox to10bp{\vrule height15bp width0pt \pdfliteral{
   q 1 J .8 w 0 1 m 10 14 l S 0 14 m 10 1 l S Q
}\hss}}}{
   \vcenter{\hbox to10bp{\kern1bp\vrule height10bp width0pt \pdfliteral{
   q 1 J .65 w 0 0 m 8 10 l S 0 10 m 8 0 l S Q
}\hss}}}{\times}{\times}
}}

\def\transpose{{\rm T}}

\usepackage{color}

\usepackage{stmaryrd}
\DeclareMathOperator*{\bigtimes}{\vartimes}

\newtheorem{theorem}{Theorem}[section]
\newtheorem{corollary}[theorem]{Corollary}
\newtheorem{lemma}[theorem]{Lemma}
\newtheorem{proposition}[theorem]{Proposition}
\newtheorem{definition}[theorem]{Definition}

\def\qed{\hfill$\Box$} 
\newenvironment{proof}{{\noindent \bf 
    Proof:}}{\hfill\qed\bigskip}

\begin{document}

\title{Improved Sampling Inequalities for Sparse Grids and
  High-Dimensional Functions with Effective Low Dimension}
\author{Christian Rieger\\ FB Mathematik und
  Informatik\\ Philipps-Universit\"at Marburg \\ 35032 Marburg\\Germany \and Holger Wendland \\Department of
  Mathematics \\ University of Bayreuth\\ 95440 Bayreuth\\ Germany
}

\maketitle
\begin{abstract}
  The approximation of high-dimensional functions is a challenging task
  due to the often appearing curse of dimensionality. In this paper,
  we combine sparse grid with anchored projection techniques to derive
  sampling inequalities for Sobolev functions of a dominating mixed
  regularity which are effectively low dimensional. To this end, we
  derive new   sampling inequalities for sparse grids and combine
  these with recently investigated regression processes of
  non-matching sampling processes.
\end{abstract}

\section{Introduction}
Sampling inequalities play a crucial role in providing error estimates
for multivariate approximation processes. They have been introduced in
\cite{Madych-Potter-85-1} for grid data and in
\cite{Wendland-Rieger-05-1} for scattered data based on earlier work
in \cite{Narcowich-etal-05-1}. Improved bounds can be found in
\cite{Arcangeli-etal-07-1,Arcangeli-etal-12-1, Narcowich-etal-06-1, Madych-06-1}. 
These sampling inequalities are all using the so-called fill-distance
to measure how well a given point set covers the underlying domain and
are hence only relevant for low-dimensional problems. For higher
dimensional problems, sparse grids, see for example
\cite{Bungartz-Griebel-04-1}, and approximation in Sobolev spaces of dominating
mixed regularity are a well-known and well-established
concept. Another often employed approach to tackle high-dimensional problems is
the use of an ANOVA (see for example
\cite{Gelman-05-1,Sobol-01-1,Owen-13-1}) or anchored decomposition
(see for example \cite{Goda-Dick-15-1,Kuo-etal-10-1}) which also
appear under the names  
many body expansions or high-dimensional model representation, see for
example \cite{Rabitz-Alics-99-1} and also \cite{Griebel-06-1} for a
more mathematical overview. 

One goal of this paper is to generalize and improve previous results
on sampling inequalities for functions from mixed regularity Sobolev
spaces over sparse grids.
The first prototypes of such sampling inequalities can be found in
\cite{Rieger-Wendland-17-1, Rieger-Wendland-20-1}. While the first
paper deals with isotropic sparse grids, the second generalizes the
results to anisotropic ones. In both cases the underlying univariate
point sets are based on Chebyshev or Clenshaw-Curtis points, which are
suited for polynomial interpolation but they are not quasi-uniform points,
which are more common when it comes to kernel-based interpolation and
approximation. Hence, our first aim is to generalize these results for
sparse grids build upon quasi-uniform points. We will do this only for
isotropic sparse grids but a generalization to anisotropic ones will
be straight-forward.  In doing this, we also improve the exponents in
the logarithmic terms, which are now in line with other more specific
error estimates, though they are not yet optimal in the sense of
\cite{Dolbeault-etal-23-1}. In contrast to the results in
\cite{Dolbeault-etal-23-1}, which are non constructive and only show
that there are point sets that satisfy a better bound, our bounds hold
for all sparse grids based upon quasi-uniform sets.
Moreover, as we are particularly interested in
$L_2$-estimates, we also measure the discrete error on the sampling
points in the discrete $\ell_2$-norm.

The second main goal of this paper is to to combine the newly derived
sampling inequalities with the approximation of high-dimensional
functions by truncated anchored decompositions. This continues and
improves our previous work in \cite{Rieger-Wendland-24-1} and
particularly in \cite{Rieger-Wendland-26-1} and has deep applications
to problems from uncertainty quantification and parametric partial
differential equations. 

Based on the goals stated above, the outline of this paper is as
follows. In the next section, we will collect and proof some necessary
univariate results. Section \ref{sec:samponsparse} contains our first
main results on improved sampling inequalities on sparse grids. Section
\ref{sec:samphighwithlow} is devoted to sampling inequalities on
specific data sets for specific high-dimensional functions, which have
an anchored representation using only low-dimensional terms. 
In the last section, the results of the previous two sections are then
combined to state and prove the desired sampling inequalities for
high-dimensional functions with an effective low dimension.

\section{Univariate Discussion}

We are concerned with univariate quasi-interpolation processes which
form local polynomial reproductions. Such processes, even in the
multivariate setting, were introduced in \cite{Wendland-01-1}, see
also \cite{Wendland-05-1}.

Throughout this paper, let $I\subseteq\R$ be a closed interval. For
$m\in\N$ let $\pi_m(\R)$ denote the space of univariate polynomials of
degree up to $m$ and $\pi_m(I)=\pi_m(\R)|I$. Moreover, let $H^m(I)$ be
the Sobolev space of all $u\in L_2(I)$ with weak derivatives
$u^{(j)}\in L_2(I)$ for $1\le j\le m$ equipped with the semi-norm
$|f|_{H^m(I)}=\|f^{(m)}\|_{L_2(I)}$ and norm
$\|f\|_{H^m(I)}^2=\sum_{j=0}^m \|f^{(j)}\|_{L_2(I)}^2$. 

For a point set $X=\{x_1,\ldots,x_N\}\subseteq I$ we define the fill distance $h_X$, the
separation radius $q_X$ and its ratio $\rho_X=h_X/q_X$ by
\[
h_{X}=\sup_{x\in I}\min_{1\le j\le N} |x-x_j|, \qquad
q_X = \frac{1}{2} \min_{j\ne k} |x_j-x_k|.
\]

\begin{definition}
Assume that for any  $X=\{x_1,\ldots,x_N\}\subseteq I$  there are
functions $a_j:I\to\R$ for $1\le i\le N$. The associated {\em
  quasi-interpolation operator} $A_X$ is then defined via
\[
A_X f(x):=\sum_{j=1}^N a_j(x) f(x_j), \qquad x\in I, f\in C(I).
\]
The process that maps each finite set $X\subseteq I$ to $A_X$ is
called a local polynomial reproduction of degree $m\in\N_0$ if there are constants
$h_0,c_1,c_2>0$ such that
\begin{enumerate}
\item $A_Xp=p$ for all $p\in\pi_m(I)$, \label{lpr1}
\item $\displaystyle \sum_{j=1}^N |a_j(x)|\le c_1$ for all $x\in I$,\label{lpr2}
\item $a_j(x)=0$ whenever $|x-x_j|>c_2h_X$ \label{lpr3}
\end{enumerate}
holds for all finite $X\subseteq I$ with $h_X\le h_0$.
\end{definition}

The existence of such local polynomial reproduction processes is
well-known, see the references above. As a matter of fact, the functions $a_j$ can be chosen
arbitrarily smooth, see for example \cite{Hangelbroek-Rieger-25-1}. 

We need the following two auxiliary results on local polynomial
reproduction processes. The first one deals with the approximation
power of quasi-interpolation processes and can be proven using
techniques as in the proof of \cite[Theorem 3.2]{Wendland-05-1} in
combination with techniques from \cite{Duchon-78-1}.

\begin{lemma}\label{lem:errorbound1d}
Suppose $X\mapsto A_X$ is a local polynomial reproduction process of
degree $m-1$ with $m\in \N$. Then, there is a constant $C>0$ such that
\[
\|f-A_Xf\|_{L_2(I)} \le C h_X^m \|f\|_{H^m(I)}
\]
for all $f\in H^m(I)$ and all finite $X\subseteq I$ with $h_X\le h_0$.
\end{lemma}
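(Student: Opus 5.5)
The proof will be local: split $I$ into pieces of length comparable to $h_X$, approximate $f$ on a neighbourhood of each piece by a Taylor polynomial of degree $m-1$, and then sum the local estimates using bounded overlap.

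\textbf{Local cover.} Set $h=h_X\le h_0$. Cover $I$ by consecutive subintervals $J_k$ of length at most $h$, with $I=\bigcup_k J_k$. For each $k$ let $\widetilde J_k = I\cap\{y: \operatorname{dist}(y,J_k)\le c_2h\}$. These are intervals of length at most $(1+2c_2)h$, and every point of $I$ lies in at most $M=M(c_2)$ of them. Since $I$ is an interval, each $\widetilde J_k$ is an interval, so no boundary cone condition is needed.

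\textbf{Local estimate.} Fix $k$ and take $x\in J_k$. By property \ref{lpr3}, only nodes $x_j\in\widetilde J_k$ contribute to $A_Xf(x)$. Let $p_k\in\pi_{m-1}$ be the averaged Taylor polynomial of $f$ on $\widetilde J_k$; one could also take the Taylor polynomial at the midpoint with integral remainder, since $m\ge1$ and $f\in H^m\subset C$. By property \ref{lpr1}, $A_Xp_k=p_k$, so
\[
|f(x)-A_Xf(x)| \le |f(x)-p_k(x)| + \sum_j |a_j(x)|\,|f(x_j)-p_k(x_j)| \le (1+c_1)\|f-p_k\|_{L_\infty(\widetilde J_k)},
\]
where the last step uses property \ref{lpr2}. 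The Taylor remainder in one dimension is
\[
f(y)-p_k(y)=\frac{1}{(m-1)!}\int (y-t)_+^{m-1} f^{(m)}(t)\,dt
\]
(suitably averaged). Applying Cauchy--Schwarz gives
\[
\|f-p_k\|_{L_\infty(\widetilde J_k)} \le C h^{m-1/2}\|f^{(m)}\|_{L_2(\widetilde J_k)}.
\]
Squaring and integrating over $J_k$, whose length is at most $h$, yields
\[
\|f-A_Xf\|_{L_2(J_k)}^2 \le C h^{2m}|f|_{H^m(\widetilde J_k)}^2.
\]

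\textbf{Summation.} Summing over $k$ and using the overlap bound $M$ gives
\[
\|f-A_Xf\|_{L_2(I)}^2\le CMh^{2m}|f|_{H^m(I)}^2 .
\]
This is even the stronger seminorm estimate, and it trivially implies the stated bound with the full norm.

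\textbf{Main obstacle.} The delicate step is the local $L_\infty$ bound, i.e.\ the embedding $H^m\hookrightarrow C$ on small intervals with the correct power of $h$. The point evaluations at the nodes $x_j$ are controlled only through the $L_\infty$ norm, and the scaling $h^{m-1/2}$ must be checked carefully: it comes from Cauchy--Schwarz on an interval of length $O(h)$. If $I$ is unbounded, the same argument works because the cover is locally finite. The constant $C$ then depends only on $m$, $c_1$ and $c_2$.
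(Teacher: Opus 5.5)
Your proposal is correct and follows essentially the same route as the paper: properties \ref{lpr2} and \ref{lpr3} reduce the pointwise error to $(1+c_1)\|f-p\|_{L_\infty}$ on an $O(h)$-neighbourhood, Cauchy--Schwarz on the Taylor remainder gives $h^{m-1/2}|f|_{H^m}$ there, and a bounded-overlap cover of $I$ by intervals of length proportional to $h$ sums this to $C h^{m}|f|_{H^m(I)}$. The differences are cosmetic. The paper expands about each point $x$ rather than using one polynomial per cell, and proves the bound for $C^m$ functions before extending by density, whereas you work in $H^m$ directly. Just take your cells $J_k$ of length exactly $h$ (except possibly the last), since the overlap constant $M(c_2)$ needs their lengths bounded below as well as above.
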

\begin{proof}
 We start by assuming $f\in C^m(I)$ and setting $h:=h_X$. Then, for any $x\in I$
we define $I_x:=[x-c_2h,x+c_2h]\cap I$. With
 any $p\in\pi_{m-1}(I) $ we can bound the point-wise error using the
 three properties of a local polynomial reproduction by
 \begin{eqnarray*}
   |f(x)-A_Xf(x)| & \le& |f(x)-p(x)| + \sum_{j=1}^N |f(x_j)-p(x_j)|
   |a_j(x)| \\
   &\le &\left(1+\sum_{j=1}^N |a_j(x)| \right)
   \|f-p\|_{L_\infty(I_x)}\\
   & \le & (1+c_1) \|f-p\|_{L_\infty(I_x)}.
 \end{eqnarray*}
 Next, using $p$ as the Taylor polynomial to $f$ of degree $m-1$ about
 $x$, we find for any $y\in I_x$ the bound
 \begin{eqnarray*}
   |f(y)-p(y)| & \le & \frac{1}{(m-1)!} \left|\int_x^y (y-t)^{m-1}
   f^{(m)}(t) dt \right|\\
   & \le & \frac{c_2^m h^{m-1}}{(m-1)!}\sqrt{|y-x|}
   \left(\int_x^y|f^{(m)}(t)|^2 dt\right)^{1/2}\\
   & \le & c_m h^{m-1/2} |f|_{H^m(I_x)},
 \end{eqnarray*}
 showing that we have a point-wise error bound of the form
 \[
 |f(x)-A_Xf(x)| \le c h^{m-1/2} |f|_{H^m(I_x)} , \qquad x\in I.
 \]
 Next, we cover $I$ by disjoint intervals $I_j$, $1\le j\le L$, of length $2c_2h$, where
 the last interval might be shorter. Thus,
 for any $x\in I$, the interval $I_x$ is contained in the union of at most two consecutive
 intervals. Moreover, for $x\in I_j$, we have $I_x\subseteq
 I_{j-1}\cup I_j\cup I_{j+1}$, where we set $I_0=I_{L+1}=\emptyset$. This then leads to
 \begin{eqnarray*}
   \|f-A_Xf\|_{L_2(I)}^2 & = & \sum_{j=1}^L \int_{I_j} 
   |f(x)-A_Xf(x)|^2 dx  \le c h^{2m-1}  \sum_{j=1}^L \int_{I_j} \int_{I_x}
   |f^{(m)}(y)|^2 dy dx \\
   &\le & ch^{2m-1}\sum_{j=1}^L \int_{I_j} \int_{I_{j-1}\cup I_{j}\cup I_{j+1}}
   |f^{(m)}(y)|^2 dy dx \\
   & \le & ch^{2m-1} 3 h |f|_{H^m(I)}^2,
 \end{eqnarray*}
 which settles the case for $C^m(I)$ functions. The general case of
 Sobolev functions $f\in H^m(I)$ then follows by density, as usual.
\end{proof}

So far, we have defined $A_X$ as an operator acting on functions from
$C(I)$. In the above lemma, for example, we interpreted it as a
mapping $A_X:C(I)\to L_2(I)$. However, for any $X$ with $N$ points,
we can also define it as an operator acting on vectors
$\cc=(c_1,\ldots,c_N)^\transpose\in\R^N$. Formally, we can achieve this by defining a new operator
$\widetilde{A}_X:\R^N\to L_2(I)$ by 
\[
\widetilde{A}_X\cc (x):=\sum_{j=1}^N a_j(x) c_j, \qquad x\in I,
\]
and a sampling operator $S_X:C(I)\to \R^N$  by $S_X
f = (f(x_1),\ldots,f(x_N))^\transpose$, showing 
\[
A_Xf = (\widetilde{A}_X\circ S_X) f.
\]

For the following result we only need Properties \ref{lpr2} and
\ref{lpr3} of a local polynomial reproduction process. The polynomial
reproduction is not required.

\begin{lemma}\label{lem:discrete}
Suppose $X\mapsto A_X$ is a local polynomial reproduction process of
degree $m\in\N_0$.  Then,
\[
\|\widetilde{A}_X\cc\|_{L_2(I)} \le c_1 (2c_2+1) (\rho_X h_X)^{1/2}
\|\cc\|_2, \qquad \cc\in \R^N,
\]
provided that $h_X\le h_0$.
\end{lemma}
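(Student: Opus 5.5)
The bound will follow from a pointwise estimate for $\widetilde{A}_X\cc(x)$ that uses only Properties \ref{lpr2} and \ref{lpr3}, followed by a counting argument that rests on the separation of the points. First I use the boundedness of the coefficients together with Cauchy--Schwarz. Fix $x\in I$ and let $J(x)=\{j: |x-x_j|\le c_2h_X\}$. By Property \ref{lpr3}, only indices in $J(x)$ contribute, so
\[
|\widetilde{A}_X\cc(x)|^2=\Bigl|\sum_{j\in J(x)} a_j(x)c_j\Bigr|^2\le \Bigl(\sum_{j=1}^N|a_j(x)|\Bigr)\sum_{j\in J(x)}|a_j(x)|\,c_j^2\le c_1\sum_{j=1}^N |a_j(x)|\,c_j^2 .
\]
Here I apply Cauchy--Schwarz with weights $|a_j(x)|$ and then Property \ref{lpr2}.

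Next I integrate over $I$ and swap the sum and the integral:
\[
\|\widetilde{A}_X\cc\|_{L_2(I)}^2\le c_1\sum_{j=1}^N c_j^2\int_I |a_j(x)|\,dx .
\]
It remains to bound $\int_I|a_j(x)|\,dx$ uniformly in $j$. Since $a_j$ is supported in $[x_j-c_2h_X,x_j+c_2h_X]$, which has length $2c_2h_X$, I only need a pointwise bound on $|a_j(x)|$. Property \ref{lpr2} gives $|a_j(x)|\le c_1$ directly. This yields $\int_I|a_j|\le 2c_1c_2h_X$ and hence
\[
\|\widetilde{A}_X\cc\|_{L_2(I)}\le c_1(2c_2)^{1/2}h_X^{1/2}\|\cc\|_2 .
\]
Because $\rho_X\ge 1$ in one dimension (one always has $h_X\ge q_X$) and $(2c_2)^{1/2}\le 2c_2+1$, this already implies the stated bound with $(\rho_Xh_X)^{1/2}$. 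The stated constant, however, suggests the authors took a different route.

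The likely intended route uses the separation radius $q_X$ through a counting argument. Write $\|\widetilde A_X\cc\|_{L_2}^2\le\int_I \bigl(\sum_{j\in J(x)}|a_j(x)||c_j|\bigr)^2dx\le c_1^2\int_I \max_{j\in J(x)}c_j^2\,dx$. Then bound this by $c_1^2\sum_j c_j^2\,|\{x:|x-x_j|\le c_2h_X\}|$. Alternatively, one counts that each $J(x)$ contains at most $2c_2h_X/(2q_X)+1\le (c_2\rho_X+1)$ points, because the points are $2q_X$-separated. This would produce factors such as $(2c_2+1)\rho_X h_X$.

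Either way, the main (mild) obstacle is making sure the exchange of sum and integral is organized so that each $c_j^2$ is weighted by at most $O(h_X)$. The support-length bound from Property \ref{lpr3} does exactly that. I would present the first, weighted Cauchy--Schwarz argument and remark that $\rho_X\ge1$ covers the stated form.
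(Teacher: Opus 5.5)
Your proof is correct, and it takes a different and sharper route than the paper's.

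The paper writes $\|\widetilde{A}_X\cc\|_{L_2(I)}^2=\cc^\transpose M\cc$ with the mass matrix $m_{jk}=\int_I a_ja_k\,dx$. It bounds each entry by $c_1^2(2c_2+1)h_X$, and uses the separation $q_X$ to count at most $(2c_2+1)\rho_X$ nonzero entries per row. It then concludes with $\|M\|_{2,2}\le\|M\|_{1,1}^{1/2}\|M\|_{\infty,\infty}^{1/2}$.

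Your weighted Cauchy--Schwarz step is in effect a Schur test, except that the row sums are controlled directly rather than by counting neighbours. Using Property~\ref{lpr2} a second time gives $\sum_k|m_{jk}|\le\int_I|a_j(x)|\sum_k|a_k(x)|\,dx\le c_1\int_I|a_j|\,dx\le 2c_1^2c_2h_X$.

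This gives the bound $c_1(2c_2h_X)^{1/2}\|\cc\|_2$, which does not depend on $q_X$ or $\rho_X$. So the separation of the points is not needed for this lemma, and the factors $\rho_{\ell_k}$ in Lemma~\ref{lem:discrete2} could likewise be dropped. The stated form then follows from $\rho_X\ge 1$ and $(2c_2)^{1/2}\le 2c_2+1$, as you note.

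One small caveat: $\rho_X\ge1$ needs $N\ge2$. For $N=1$, as with $X^{(1)}$ later in the paper, $q_X$ is undefined. The paper's counting argument shares this edge case; your $\rho$-free bound avoids it.

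The paper's mass-matrix formulation is the more standard template and makes the sparsity structure explicit. For this upper bound, though, it loses a factor $\rho_X^{1/2}$, which is harmless only because the point sets are later assumed quasi-uniform.

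Your second paragraph about the \emph{intended route} is not needed. Presenting the weighted Cauchy--Schwarz argument is the right choice.
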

\begin{proof}
  For $\cc\in\R^N$ we observe
  \[
  \|\widetilde{A}_X\cc\|_{L_2(I)}^2 = \sum_{j,k=1}^N c_j c_k \int_I
  a_j(x)a_k(x) dx = \cc^\transpose M \cc \le \|M\|_{2,2}\|\cc\|_2^2,
  \]
  with the mass matrix $M=(m_{jk})$ with $m_{jk}= \int a_j a_k
  dx$. To
  bound this further, we first note that the symmetric matrix $M$ is actually
  sparse, as we have $m_{jk}=0$ whenever $|x_j-x_k| >2c_2h_X$. Hence,
  the number of non-zero entries in row (or column) $j$ is bounded by
  \[
  |\{k\in\{1,\ldots,N\} : |x_j-x_k|\le 2c_2 h_X\}| \le
  2c_2\frac{h_X}{q_X}+1 \le (2c_2+1)\rho_X,
  \]
  using a standard volume comparison argument, the fact that we are
  one dimensional and $1\le \rho_X$. Moreover, using the boundedness
  of the Lebesgue constants and the locality of the $a_j$ again,
  yields with $I_j:=[x_j-c_2h_X,x_j+c_2h_X]\cap I$, 
  \[
  |m_{jk}| = \left|\int_I a_j(x) a_k(x) dx \right| \le
  \int_{I_j} c_1^2 dx \le 2c_1^2c_2h_X \le c_1^2(2c_2+1) h_X.
  \]
  As $M$ is symmetric this yields
  \[
  \|M\|_{\infty,\infty} = \|M\|_{1,1} = \max_{1\le j\le n}
  \sum_{k=1}^N |m_{jk}| \le c_1^2(2c_2+1)^2\rho_X h_X
  \]
  and thus also
  \[
  \|M\|_{2,2} \le \|M\|_{\infty,\infty}^{1/2}\|M\|_{1,1}^{1/2} \le c_1^2
  (2c_2+1)^2\rho_X h_X,
  \]
  which yields the stated bound.
\end{proof}

\section{Sampling on Sparse Grids}\label{sec:samponsparse}

In this section, we will derive sampling inequalities on sparse grids,
which are formed by using univariate quasi-uniform point sets. This
generalizes and improves results from \cite{Rieger-Wendland-17-1} and
extends results from \cite{Rieger-Wendland-24-1}.

\subsection{Smolyak's Algorithm and Sparse Grids}
We will now shortly review the necessary material on sparse grids and
Smolyak's algorithm for constructing high-dimensional approximation
operators from one-dimensional operators. To this end, we will use the
following notation.

For $d\in\N$ let $I^d=I\times\cdots\times I \subseteq\R^d$ be
the $d$-fold cube in $\R^d$ and, for $m\in \N$ let $H^m_{\mix}(I^d)$ be
the Sobolev space of mixed regularity of order $m$, i.e.
\begin{eqnarray*}
  H^m_{\mix}(I^d)& = & H^m(I)\otimes\cdots\otimes H^m(I)\\
  & = & \{f\in L_2(I^d) :
D^\aalpha f \in L_2(I^d) \mbox{ for } \aalpha\in\N_0^d \mbox{ with }
\|\aalpha\|_\infty\le m\},\\
\|f\|_{H^m_{\mix}(I^d)}^2 & = & \sum_{\substack{
  \aalpha\in\N_0^d\\ \|\aalpha\|_\infty\le m}}\|D^\aalpha
f\|_{L_2(I^d)}^2.
\end{eqnarray*}

Next, for $\ell\in\N$ let
$X^{(\ell)}=\{x_1^{(\ell)},\ldots,x_{N_\ell}^{(\ell)}\}\subseteq I$ be
given point sets with mesh norms $h_\ell:=h_{X^{(\ell)}}$, separation
radii $q_\ell:=q_{X^{(\ell)}}$ and ratios
$\rho_\ell:=h_\ell/q_\ell$. Assume that we have a locally polynomial
reproducing quasi-interpolation process, $X\mapsto A_X$, which we will
apply to the point sets $X^{(\ell)}$, writing this in the form
\begin{equation}\label{Al}
A^{(\ell)} f (x) = A_{X^{(\ell)}} f(x) = \sum_{j=1}^{N_\ell}
  a_j^{(\ell)}(x) f(x_j^{(\ell)}), \qquad x\in I, f\in C(I).
\end{equation}

We will discuss here only the case of the classic, {\em isotropic}
Smolyak operator, see \cite{Smolyak-63-1,Barthelmann-etal-00-1}. To this end, for $p\in\N$, $p\ge d$, we define the
index sets $Q(p,d)= \{\ii\in\N^d : |\ii|\le p\}$ and $P(p,d) =
\{\ii\in\N^d : p-d+1\le |\ii|\le p\}$. Then, Smolyak's operator for
the index set $Q(p,d)$ has the form
\begin{eqnarray*}
\cA_{Q(p,d)} f (\xx)  &=&  \sum_{\eell\in P(p,d)} (-1)^{p-|\eell|}
\binom{d-1}{p-|\eell|} (A^{(\ell_1)}\otimes\cdots\otimes A^{(\ell_d)})u(\xx)\\
& = & \sum_{\eell\in P(p,d)} (-1)^{p-|\eell|}
\binom{d-1}{p-|\eell|} \sum_{\jj\le \NN}
f\left(x_{j_1}^{(\ell_1)},\ldots x_{j_d}^{(\ell_d)}\right) \prod_{k=1}^da_{j_k}^{(\ell_k)}(x_k).
\end{eqnarray*}
It obviously requires the knowledge of the function $f$ on the sparse
grid
\[
H(p,d)=\bigcup_{\eell\in P(p,d)} \left(X^{(\ell_1)}\times\cdots\times X^{(\ell_d)} \right).
\]

\subsection{Sampling Inequalities}

  In this section, we will derive a general sampling inequality for
  functions $f\in H^m_{\mix}(I^d)$ using, besides the Sobolev norm,
  the $L_2(I^d)$ and $\ell_2(H(p,d))$ norms. The proof is based upon
  the simple triangle estimate
  \begin{equation}\label{gensplit}
\| f\|_{L_2(I^d)} \le \|f-\cA_{Q(p,d)}f \|_{L_2(I^d)} +
\|\cA_{Q(p,d)}f\|_{L_2(I^d)}.
\end{equation}
 We will give bounds for both terms on the right-hand side. Bounds on
 the first term are well-known and follow, for example, from the
 following generic result, see
 \cite[Lemma 2]{Wasilkowski-Wozniakowski-95-1}.

 \begin{lemma}\label{lem:genconvresult}
 Let $U$ and $V$ be normed spaces with tensor products
 $S=U\otimes\cdots \otimes U$ and $T=V\otimes\cdots\otimes V$ .
 Assume that there are constants $C,\widetilde{C}>0$  and $D\in (0,1)$
 such that the  univariate operators 
$A^{(\ell)}: U\to V$ and the embedding operator $E_1:U\to V$ satisfy the  
bounds
\begin{enumerate}
\item  $ \|E_1\|_{U\to V}  =   1$, 
\item  $ \|E_1-A^{(\ell)}\|_{U\to V}  \le  C D^{\ell}$,  for $ \ell
  \ge 0$, 
\item $ \|A^{(\ell)}-A^{(\ell-1)}\|_{U\to V}  \le \widetilde{C}
  D^{\ell}$, for  $\ell \ge 1$.
\end{enumerate}
Then, the error for the Smolyak operator can  be bounded by 
\[
\|\cE_d-\cA_{Q(p,d)}\|_{S\to T} \le  C \left( \max\left\{
\frac{1}{D},\widetilde{C}\right\}
\right)^{d-1}\genfrac{(}{)}{0pt}{}{p}{d-1} D^{p}, \quad \text{for all
} p\ge d,\] 
where $\cE_d=E_1\otimes \cdots\otimes E_1$ is the embedding of $S$
into $T$ and $A^{(0)}(u)=0\in V$ for all $u\in U$.

\end{lemma}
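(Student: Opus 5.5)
The approach is the standard telescoping (hierarchical surplus) argument of Wasilkowski and Wo\'zniakowski. We introduce the univariate difference operators $\Delta^{(\ell)}:=A^{(\ell)}-A^{(\ell-1)}$ for $\ell\ge 1$. Since $A^{(0)}=0$, this gives $A^{(\ell)}=\sum_{k=1}^{\ell}\Delta^{(k)}$. By hypotheses 2 and 3 (and $\|A^{(1)}\|\le\|E_1\|+CD$ for $\ell=1$), each $\Delta^{(k)}$ satisfies $\|\Delta^{(k)}\|_{U\to V}\le \widetilde C D^{k}$. The Smolyak operator has the well-known alternative representation
\[
\cA_{Q(p,d)}=\sum_{\ii\in Q(p,d)}\Delta^{(i_1)}\otimes\cdots\otimes\Delta^{(i_d)},
\]
which follows from the combination-technique formula in the definition by expanding each tensor factor and counting binomial coefficients. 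We will take this identity as the starting point.

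Next, we prove the bound by induction on $d$. For $d=1$ we have $\cA_{Q(p,1)}=A^{(p)}$, and hypothesis 2 gives the claim directly with $\binom{p}{0}=1$. For the induction step, we use the recursion
\[
\cA_{Q(p,d)}=\sum_{k=1}^{p-d+1}\cA_{Q(p-k,d-1)}\otimes\Delta^{(k)}
\]
and write $\cE_d=\cE_{d-1}\otimes E_1$. Telescoping $E_1=A^{(p-d+1)}+(E_1-A^{(p-d+1)})$ then gives
\[
\cE_d-\cA_{Q(p,d)}=\cE_{d-1}\otimes(E_1-A^{(p-d+1)})+\sum_{k=1}^{p-d+1}(\cE_{d-1}-\cA_{Q(p-k,d-1)})\otimes\Delta^{(k)}.
\]

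To estimate the pieces we need the norm of a tensor product of operators to be bounded by the product of the norms, i.e.\ the tensor norms must be cross norms. This is the implicit assumption in the cited lemma, and we adopt it. The first term is bounded by $CD^{p-d+1}$, using $\|\cE_{d-1}\|=1$. By the induction hypothesis, the $k$-th summand is bounded by
\[
C\,M^{d-2}\binom{p-k}{d-2}D^{p-k}\cdot\widetilde C D^{k}\le C\,M^{d-1}\binom{p-k}{d-2}D^{p},
\]
where $M=\max\{1/D,\widetilde C\}$. The first term can likewise be written as $C D^{p}D^{-(d-1)}\le CM^{d-1}D^{p}$. Summing, we get
\[
\|\cE_d-\cA_{Q(p,d)}\|\le CM^{d-1}D^{p}\left[\binom{d-2}{d-2}+\sum_{k=1}^{p-d+1}\binom{p-k}{d-2}\right].
\]

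The main obstacle is the combinatorics. We must check that the bracketed sum is at most $\binom{p}{d-1}$. By the hockey-stick identity, $\sum_{j=d-2}^{p-1}\binom{j}{d-2}=\binom{p}{d-1}$. Our sum covers $j=d-1,\dots,p-1$, and the leftover first term $\binom{d-2}{d-2}=1$ supplies exactly the missing $j=d-2$ term. This yields the stated estimate. The remaining care lies in the edge cases: $p=d$ in the inner induction, where $p-k\ge d-1$ is required, and the constant at $\ell=1$ for $\Delta^{(1)}$. The latter may force replacing $\widetilde C$ by $\max\{\widetilde C,(1+CD)/D\}$; this is absorbed by $M$ if hypothesis 3 is read to include $\ell=1$ with $A^{(0)}=0$, which is how the statement is phrased.
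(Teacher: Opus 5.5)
Your proof is correct and is essentially the standard inductive argument of Wasilkowski and Wo\'zniakowski (surplus representation, the recursion $\cA_{Q(p,d)}=\sum_{k}\cA_{Q(p-k,d-1)}\otimes\Delta^{(k)}$, the splitting $\cE_d-\cA_{Q(p,d)}=\cE_{d-1}\otimes(E_1-A^{(p-d+1)})+\sum_k(\cE_{d-1}-\cA_{Q(p-k,d-1)})\otimes\Delta^{(k)}$, and the hockey-stick identity), which the paper does not reprove but simply cites. Your worry about $\ell=1$ is unnecessary, since hypothesis 3 together with $A^{(0)}=0$ already gives $\|A^{(1)}\|\le\widetilde C D$; and the cross-norm property you make explicit is indeed an implicit assumption of the statement, and it holds for the Hilbert tensor products used in Theorem~\ref{thm:contSmolyak}.
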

 Note that the third assumption in the theorem above is actually a
 consequence of the second one simply by triangle inequality. 
 We only state it here, as it is usually done so in this context. The first assumption is usually
 easily satisfied such that the only challenge is given by the second
 assumption.
 
 This generic result gives, in our situation the following error
bound.

 \begin{theorem}\label{thm:contSmolyak}
Assume that the Smolyak operator is built using a local polynomial
reproduction process of degree $m-1$. Assume further that the
univariate point sets are quasi-uniform with $N_1=1$ and
$N_\ell=2^{\ell-1}+1$, $\ell\ge 2$, 
points and thus $h_\ell\sim 2^{-\ell+1}$. Let $N$ be the number of points
in the sparse grid $H(p,d)$ with a fixed $p\ge d$. Then, there are
constants $C^{(i)}_d>0$, $i=1,2$ such that
\[
\|\cE_d-\cA_{Q(p,d)} \|_{H^m(I^d)\to L_2(I^d)}\le C^{(1)}_d
p^{d-1}2^{-(p-d) m}
\le C^{(2)}_d (\log N)^{(m+1)(d-1)} N^{-m},
\]
where in particular $C^{(1)}_d = c^d/(d-1)!$ for some $c\in (0,\infty)$.
 \end{theorem}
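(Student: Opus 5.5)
The plan is to apply Lemma \ref{lem:genconvresult} with $U=H^m(I)$, $V=L_2(I)$ and $D=2^{-m}$. Since $H^m(I)\otimes\cdots\otimes H^m(I)=H^m_{\mix}(I^d)$ and $L_2(I)\otimes\cdots\otimes L_2(I)=L_2(I^d)$, with the Hilbert tensor norms, this is the operator norm in the statement (read with $H^m_{\mix}$ as domain). Throughout I set $A^{(0)}=0$.

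\textbf{Step 1: the three hypotheses of Lemma \ref{lem:genconvresult}.} The embedding $E_1:H^m(I)\to L_2(I)$ has norm at most $1$ by the definition of the norm. To get equality, test with constants: the norm is exactly $1$. For the second hypothesis, take $\ell\ge1$ large enough that $h_\ell\le h_0$. Then Lemma \ref{lem:errorbound1d} gives
\[
\|E_1-A^{(\ell)}\|\le C h_\ell^m\le C' 2^{-\ell m}=C'D^\ell .
\]
The finitely many small levels, including $\ell=0$ where $\|E_1\|=1$, are absorbed by enlarging the constant. This requires each $A^{(\ell)}$ to be bounded $H^m\to L_2$. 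For $h_\ell\le h_0$ this holds with bound $1+C'$. For the finitely many coarse levels, boundedness needs property 2 together with the Sobolev embedding $H^m(I)\hookrightarrow C(I)$ for $m\ge1$. The third hypothesis follows from the triangle inequality with $\widetilde C=C'(1+2^m)$.

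\textbf{Step 2: read off the first inequality.} Lemma \ref{lem:genconvresult} now yields
\[
\|\cE_d-\cA_{Q(p,d)}\|\le C'\max\{2^m,\widetilde C\}^{d-1}\binom{p}{d-1}2^{-pm}.
\]
I will use $\binom{p}{d-1}\le p^{d-1}/(d-1)!$ and write $2^{-pm}=2^{-(p-d)m}2^{-dm}$. Collecting the $d$-th powers gives $C^{(1)}_d=c^d/(d-1)!$ with $c=\max\{C',1\}\max\{2^m,\widetilde C\}$, which is independent of $d$.

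\textbf{Step 3: counting points.} I need $N=|H(p,d)|$ in terms of $p$. Each block $X^{(\ell_1)}\times\cdots\times X^{(\ell_d)}$ has at most $\prod_k 2^{\ell_k}=2^{|\eell|}\le 2^p$ points. There are at most $\binom{p}{d-1}$ choices of $\eell\in P(p,d)$, so $N\le 2^p p^{d-1}$, and hence $\log N\lesssim_d p$.

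For a lower bound, the single block with $\eell=(p-d+1,1,\ldots,1)$ already gives $N\ge 2^{p-d}$. A sharper lower bound is the delicate part. I would use $N\ge c_d 2^{p}p^{d-1}$, obtained by counting multi-indices in $P(p,d)$ whose blocks contribute roughly disjoint new points. Nestedness is not assumed, so I would argue with the largest block level alone if necessary.

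\textbf{Step 4: convert to $N$.} With $2^{-p}\lesssim_d p^{d-1}/N$ and $p\lesssim_d\log N$, this gives
\[
p^{d-1}2^{-pm}\lesssim_d p^{(d-1)(m+1)}N^{-m}\lesssim_d(\log N)^{(m+1)(d-1)}N^{-m}.
\]

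\textbf{Main obstacle.} The main obstacle is the lower bound $N\gtrsim 2^p p^{d-1}$ without nestedness. If this fails, I would fall back on the cruder bound $N\ge 2^{p-d}$. That fallback gives $2^{-pm}\le 2^{-dm}N^{-m}$, which combined with $p\lesssim\log N$ still yields $(\log N)^{d-1}N^{-m}$. This is even better than the stated bound, and since $\log N\ge c>0$ it implies the claimed estimate after adjusting $C^{(2)}_d$.
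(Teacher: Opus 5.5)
Steps 1 and 2 match the paper's argument: Lemma \ref{lem:genconvresult} with $D=2^{-m}$, the third hypothesis from the triangle inequality, then $\binom{p}{d-1}\le p^{d-1}/(d-1)!$. You are also more careful than the paper about the coarse levels with $h_\ell>h_0$ and about $\ell=0$.

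The gap is in Steps 3--4, where you have the roles of the two bounds on $N$ reversed. The inequality you use in Step 4, $2^{-p}\lesssim_d p^{d-1}/N$, is equivalent to $N\lesssim_d 2^p p^{d-1}$. That is the \emph{upper} bound you already proved in Step 3. No lower bound $N\gtrsim 2^pp^{d-1}$ can help here, because a lower bound on $N$ only bounds $N^{-1}$ from above, which is the wrong direction. Conversely, $p\lesssim_d\log N$ comes from the \emph{lower} bound $N\ge 2^{p-d}$, namely $p\le \log_2 N+d$. The consequence $\log N\lesssim_d p$ that you drew from the upper bound is never needed. So your ``main obstacle'' does not exist: $2^{p-d}\le N\lesssim_d 2^pp^{d-1}$ is all that is required. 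This is exactly the paper's route, via $2^{p-d}\le N\le 2^{p+1}\binom{p-1}{d-1}$ and $2^{-pm}\le 2^m\binom{p-1}{d-1}^m N^{-m}$.

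Your fallback is wrong as stated. From $N\ge 2^{p-d}$ you get $2^{-pm}\ge 2^{-dm}N^{-m}$, not $\le$, so the claimed improvement to $(\log N)^{d-1}N^{-m}$ is unfounded. It cannot be extracted from the first inequality at all. For nested point sets $N\asymp_d 2^pp^{d-1}$, hence $p^{d-1}2^{-pm}\asymp_d(\log N)^{(m+1)(d-1)}N^{-m}$, which is precisely the stated exponent.

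Two smaller points:
\begin{itemize}
\item $|P(p,d)|$ is not bounded by $\binom{p}{d-1}$; for $d=2$ it equals $2p-3$. Grouping the blocks by $|\eell|=j$ gives $N\le\sum_{j=p-d+1}^p\binom{j-1}{d-1}2^j\le 2^{p+1}\binom{p-1}{d-1}$, which only affects constants.
\item $\log N\ge c>0$ fails for $p=d$, where $N=1$. So the conversion $p\lesssim_d\log N$ needs $p\ge d+1$. The paper likewise restricts to $N\ge 2^d$ at this point.
\end{itemize}
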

 \begin{proof}
  We will apply the generic result above with $U=H^m(I)$ and
  $V=L_2(I)$ and thus $S=H_{\mix}^m(I^d)$ and $T=L_2(I^d)$.  
 We obviously have that the embedding operator $E_1:H^m(I)\to L_2(I)$
 satisfies  the first assumption in Lemma
 \ref{lem:genconvresult}. To see that the second assumption is satisfied with
 $D=2^{-m}$, we use that due to the quasi-uniformity of the data sets,
 there is a $\gamma>0$
 that $h_\ell\le \gamma
 2^{-\ell+1}$. Hence,  if we denote the constant from
 \ref{lem:errorbound1d} by $C_{m}$ then we have by said lemma, 
 \[
 \|E_1-A^{(\ell)}\|_{H^m(I)\to L_2(I)} \le
 C_{m} h_\ell^m
 \le C_{m} \gamma^m
 2^{(-\ell+1) m} = C_{m} \gamma^m 2^{m} \left(2^{-m}\right)^{\ell},
 \]
 
 This yields $C=C_{m} (2\gamma)^{m}$ for $C$ from Lemma
 \ref{lem:genconvresult}. As mentioned above, the third assumption follows from 
 the second, i.e  we have 
 \begin{align*} 
 \|A^{(\ell)}-A^{(\ell-1)}\|_{H^m(I)\to L_2(I)} &\le  \|A^{(\ell)}-E_1
   \|_{H^m(I)\to L_2(I)} +\| E_1- A^{(\ell-1)}\|_{H^m(I)\to L_2(I)}\\ 
 &\le C \left(2^{-m}\right)^{\ell}+  C  \left(2^{-m}\right)^{\ell-1} =
   (2^m+1)C\left(2^{-m}\right)^{\ell},  
 \end{align*}
showing $\widetilde{C}=(2^m+1)C= (2^m+1)
C_{m} (2\gamma)^{2m} \le 2^{2m+1}\gamma^m C_{m}$.
 Thus, Lemma \ref{lem:genconvresult} immediately yields for $p\ge d$
 the bound 
 \begin{equation}\label{errorest1}
\|\cE_d-\cA_{Q(p,d)}\|_{H^m_{\mix}(I^d)\to L_2(I^d)} \le \widetilde{C}_d^{(1)}
 \binom{p}{d-1}2^{-mp}
 \end{equation}
with $\widetilde{C}_d^{(1)}:= C^{d}_{m} \gamma^{dm} 2^{(d-1)(2m+1)+m} $.
This can further be bounded by
\[
\|\cE_d-\cA_{Q(p,d)}\|_{H^m_{\mix}(I^d)\to L_2(I^d)} \le \widetilde{C}_d^{(1)}
\frac{p^{d-1}}{(d-1)!} 2^{-mp},
\]

Next, for  the number of points $N$ of the sparse grid built with
non-nested point sets $X^{(\ell)}$ we have  the 
bounds 
\begin{equation}\label{Nbounds_first} 
2^{p-d}\le N=|H(p,d)| \le 2^{p+1}\binom{p-1}{d-1}, 
\end{equation} 
where the upper bound follows from $N_\ell \le 2^\ell-1$ and thus
\cite[Lemma 7]{Wasilkowski-Wozniakowski-95-1} and the lower bound
follows from the fact that one of the indices $\eell\in\N^d$ with
$|\eell|=p$ has the form $\eell=(1,\ldots,1,p-d+1)$ leading to $|H(p,d)|\ge
|X^{(1)}\times\cdots\times X^{(1)}\times X^{(p-d+1)}|= 2^{p-d}+1\ge 2^{p-d}$.
  
which yield on the one hand the bound $p\le \log_2 N+d\le c_d \log N$
with a certain $c_d>0$, which we can choose dimension independent as
$2 /\log 2$ if $N\ge 2^d$,  and on the other hand
\begin{equation}\label{Nbounds}
2^{-pm} \le N^{-m} \binom{p-1}{d-1}^m 2^m.
\end{equation} 
  Inserting both in (\ref{errorest1})
 yields
\begin{eqnarray*}
\|\cE_d-\cA_{Q(p,d)}\|_{H^m_{\mix}(I^d)\to L_2(I^d)} &\le&
\widetilde{C}_d^{(1)} \binom{p-1}{d-1}^{m+1} 2^m N^{-m}
\le  \widetilde{C}_d^{(1)} 2^m \frac{p^{(d-1)(m+1)}}{[(d-1)!]^{m+1}} N^{-m}\\
&\le & \widetilde{C}_d^{(1)} \frac{2^m c_d^{(d-1)(m+1)}}{[(d-1)!]^{m+1}}
(\log N)^{(m+1)(d-1)} N^{-m}, 
\end{eqnarray*}
which is the second stated bound.
   \end{proof}

Next, we turn to the second term on the right-hand side of
(\ref{gensplit}). To derive a bound, we mimic the ideas leading to
Lemma \ref{lem:discrete}. Associated to the operator
$A^{(\ell)}:C(I)\to L_2(I)$ from
(\ref{Al}) is the operator $\widetilde{A}^{(\ell)}:\R^{N_\ell}\to L_2(I)$ defined by
\[
\widetilde{A}^{(\ell)} \cc (x) = \sum_{j=1}^{N_\ell}
  a_j^{(\ell)}(x)c_j, \qquad \cc\in\R^{N_\ell}, \quad x\in I.
  \]
and the sampling operator $S^{(\ell)}:C(I)\to \R^{N_\ell}$ defined by
$S^{(\ell)}f:=(f(x_1^{(\ell)}),\ldots, f(x_{N_\ell}))^\transpose$ such
  that we have again $A^{(\ell)}= \widetilde{A}^{(\ell)}\circ
  S^{(\ell)}$.

  Next, we need to recall, for details see for example
  \cite{Hackbusch-12-1}, that the tensor product of the spaces 
  $(C(I), \|\cdot\|_{L_\infty(I)})$ using the injective norm is
   isomorphic to $(C(I^d), L_\infty(I^d))$ and the tensor
  product of the spaces $L_2(I)$ with the standard $L_2(I)$ norm is
  isomorphic to $(L_2(I^d),\|\cdot\|_{L_2(I^d)})$ and that
  the norms are compatible. Moreover,  the tensor product of
  $(\R^{n_{j}},\|\cdot\|_{\ell_2})$, $1\le j\le d$, is given by
   $(\R^{n_1}\otimes\cdots\otimes \R^{n_d}, \|\cdot\|_{\ell_2})$,
  where  the norm of any tensor $\aa\in \R^{n_1}\otimes\cdots\otimes \R^{n_d}$,
  when written in the standard basis as
  \[
  \aa = \sum_{i_1=1}^{n_1}\cdots \sum_{i_d=1}^{n_d}
  a_{i_1,\ldots,a_{i_d}} \ee_{i_1}^{(n_1)}\otimes\cdots\otimes
    \ee_{i_d}^{(n_d)},
    \]
    is given by
    \[
    \|\aa\|_{\ell_2}^2 = \sum_{i_1=1}^{n_1}\cdots\sum_{i_d=1}^{n_d}
    a_{i_1,\ldots,i_d}^2.
    \]
  Thus,  we have for any  $\eell=(\ell_1,\ldots,\ell_d)^\transpose\in\N^d$
  that the tensor product operators are mappings of the form
  \begin{eqnarray*}
    A^{(\ell_1)}\otimes\cdots\otimes A^{(\ell_d)}&:& (C(I^d),
  \|\cdot\|_{L_\infty(I^d)})\to (L_2(I^d),\|\cdot\|_{L_2(I^d)}),\\
  \widetilde{A}^{(\ell_1)}\otimes\cdots\otimes
  \widetilde{A}^{(\ell_d)}&:&
  (\R^{N_{\ell_1}}\otimes\cdots\otimes\R^{N_{\ell_d}},
  \|\cdot\|_{\ell_2})\to (L_2(I^d),\|\cdot\|_{L_2(I^d)}),\\
  S^{(\ell_1)}\otimes\cdots\otimes S^{(\ell_d)}&:&
  (C(I^d),\|\cdot\|_{L_\infty(I^d)})) \to  (\R^{N_{\ell_1}}\otimes\cdots\otimes\R^{N_{\ell_d}},
  \|\cdot\|_{\ell_2}).
\end{eqnarray*}

\begin{lemma}
  Under the assumptions above, the involved operators satisfy
  \[
  A^{(\ell_1)}\otimes\cdots \otimes A^{(\ell_d)} = (\widetilde{A}^{(\ell_1)}\otimes\cdots\otimes
  \widetilde{A}^{(\ell_d)}) \circ ( S^{(\ell_1)}\otimes\cdots\otimes
  S^{(\ell_d)})
  \]
  for all $\eell\in\N^d$.
\end{lemma}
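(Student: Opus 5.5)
The plan is to check the identity on elementary tensors, extend it by linearity to the algebraic tensor product $C(I)\otimes\cdots\otimes C(I)$, and then pass to all of $C(I^d)$ by density and continuity. Alternatively, and more directly, the identity can be verified pointwise for every $f\in C(I^d)$. We will do the direct computation, since it avoids any subtlety about completions, and mention the tensor-product interpretation only for consistency.

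First we make the meaning of each operator explicit. For $f\in C(I^d)$, the operator $S^{(\ell_1)}\otimes\cdots\otimes S^{(\ell_d)}$ acts on elementary tensors $f_1\otimes\cdots\otimes f_d$ by
\[
S^{(\ell_1)}f_1\otimes\cdots\otimes S^{(\ell_d)}f_d=\sum_{\jj} f_1(x^{(\ell_1)}_{j_1})\cdots f_d(x^{(\ell_d)}_{j_d})\,\ee_{j_1}\otimes\cdots\otimes\ee_{j_d}.
\]
Hence, by linearity and continuity with respect to $\|\cdot\|_{L_\infty(I^d)}$ (point evaluations are continuous, and the target is finite dimensional), it equals $\sum_{\jj\le\NN} f(x^{(\ell_1)}_{j_1},\ldots,x^{(\ell_d)}_{j_d})\,\ee_{j_1}\otimes\cdots\otimes\ee_{j_d}$. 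Here $\NN=(N_{\ell_1},\ldots,N_{\ell_d})$, so this is the tensor of sampled values on the full grid. Similarly, $\widetilde A^{(\ell_1)}\otimes\cdots\otimes\widetilde A^{(\ell_d)}$ maps $\ee_{j_1}\otimes\cdots\otimes\ee_{j_d}$ to $\prod_{k}a^{(\ell_k)}_{j_k}(x_k)$, and it extends linearly to the finite-dimensional space. Also $A^{(\ell_1)}\otimes\cdots\otimes A^{(\ell_d)}f(\xx)=\sum_{\jj\le\NN}f(x^{(\ell_1)}_{j_1},\ldots,x^{(\ell_d)}_{j_d})\prod_k a^{(\ell_k)}_{j_k}(x_k)$. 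This is the formula already used in the definition of $\cA_{Q(p,d)}$. It follows from the elementary-tensor case, since $A^{(\ell)}=\widetilde A^{(\ell)}\circ S^{(\ell)}$ gives $(A^{(\ell_1)}\otimes\cdots\otimes A^{(\ell_d)})(f_1\otimes\cdots\otimes f_d)=\prod_k A^{(\ell_k)}f_k(x_k)$, and both sides are bounded linear in $f$ with respect to $L_\infty$.

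Composing the two explicit formulas gives exactly the third, which proves the claim. On elementary tensors the identity is simply the multiplicativity $(T_1\otimes\cdots\otimes T_d)\circ(R_1\otimes\cdots\otimes R_d)=(T_1R_1)\otimes\cdots\otimes(T_dR_d)$ together with $A^{(\ell_k)}=\widetilde A^{(\ell_k)}\circ S^{(\ell_k)}$.

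The only real point is the extension from the algebraic tensor product to all of $C(I^d)$. We handle it as follows. The algebraic tensor product is dense in $C(I^d)$ under the sup norm, by Stone–Weierstrass or the injective-norm identification recalled above. Both sides are bounded linear operators from $(C(I^d),\|\cdot\|_{L_\infty})$ into $L_2(I^d)$, because each is a finite sum of point evaluations times bounded functions $a_j^{(\ell)}$, and Property \ref{lpr2} gives $|a_j^{(\ell)}|\le c_1$. Two bounded operators that agree on a dense set agree everywhere, which finishes the proof.
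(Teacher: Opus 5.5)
Your proof is correct and follows the paper's route. Like the paper, you check the identity on elementary tensors via $A^{(\ell_k)}=\widetilde{A}^{(\ell_k)}\circ S^{(\ell_k)}$ and the multiplicativity of tensor products of operators, then extend by linearity and density. Your explicit grid formulas, together with the boundedness of both sides from $(C(I^d),\|\cdot\|_{L_\infty(I^d)})$ into $L_2(I^d)$, spell out the density step that the paper leaves implicit; once those formulas hold on all of $C(I^d)$, your final density paragraph is redundant.
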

\begin{proof}
For an elementary tensor $f=f_1\otimes\cdots \otimes f_d\in C(I^d)$  We have 
  the identity
\begin{eqnarray*}
  A^{(\ell_1)}\otimes\cdots \otimes A^{(\ell_d)} (f) & = &
  (\widetilde{A}^{(\ell_1)}\circ S^{(\ell_1)}) \otimes \cdots\otimes
  (\widetilde{A}^{(\ell_d)}\circ S^{(\ell_d)}) (f_1\otimes\cdots
  \otimes f_d)\\
  & = & (\widetilde{A}^{(\ell_1)}\circ S^{(\ell_1)})(f_1)\otimes\cdots
  \otimes (\widetilde{A}^{(\ell_d)}\circ S^{(\ell_d)})(f_d)\\
  & = & \widetilde{A}^{(\ell_1)}(S^{(\ell_1)}f_1)\otimes \cdots
  \otimes \widetilde{A}^{(\ell_d)}(S^{(\ell_d)}f_d)\\
  & = &(\widetilde{A}^{(\ell_1)}\otimes\cdots\otimes
  \widetilde{A}^{(\ell_d)}) (S^{(\ell_1)}f_1\otimes\cdots\otimes S^{(\ell_d)}f_d)\\
  & = & (\widetilde{A}^{(\ell_1)}\otimes\cdots\otimes
  \widetilde{A}^{(\ell_d)}) \circ ( S^{(\ell_1)}\otimes\cdots\otimes
  S^{(\ell_d)}) (f)
\end{eqnarray*}
and the general case follows by linearity and density.
\end{proof}

With this, we have the following result, which also only
requires the second and third property of the local polynomial
reproduction process, i.e. the boundedness of the Lebesgue functions
and the locality of the weights. The polynomial reproduction is not necessary.

\begin{lemma}\label{lem:discrete2}
  Assume that the Smolyak operator is built using a local polynomial
  reproduction process of degree $m-1$. Then,
  \[
   \| A^{(\ell_1)}\otimes\cdots \otimes A^{(\ell_d)}f\|_{L_2(I^d)} 
   \le 
  [c_1(2c_2+1)]^d \prod_{k=1}^d (\rho_{\ell_k}h_{\ell_k})^{1/2}
  \|f\|_{\ell_2(X)}
  \]
  for all $f\in C(I^d)$. 
\end{lemma}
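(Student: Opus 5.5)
The plan is to reduce the statement to the univariate Lemma \ref{lem:discrete} via the factorization from the previous lemma. Here $X$ denotes the full tensor grid $X^{(\ell_1)}\times\cdots\times X^{(\ell_d)}$, and $\|f\|_{\ell_2(X)}$ is the $\ell_2$-norm of the sample tensor $(S^{(\ell_1)}\otimes\cdots\otimes S^{(\ell_d)})f$. By the previous lemma we have
\[
A^{(\ell_1)}\otimes\cdots\otimes A^{(\ell_d)}f = (\widetilde{A}^{(\ell_1)}\otimes\cdots\otimes\widetilde{A}^{(\ell_d)})\,\aa,
\qquad \aa := (S^{(\ell_1)}\otimes\cdots\otimes S^{(\ell_d)})f .
\]
It therefore suffices to show that the operator norm of $\widetilde{A}^{(\ell_1)}\otimes\cdots\otimes\widetilde{A}^{(\ell_d)}$ from $(\R^{N_{\ell_1}}\otimes\cdots\otimes\R^{N_{\ell_d}},\|\cdot\|_{\ell_2})$ to $L_2(I^d)$ is at most the product of the univariate norms. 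By Lemma \ref{lem:discrete}, each univariate norm is bounded by $c_1(2c_2+1)(\rho_{\ell_k}h_{\ell_k})^{1/2}$. This gives exactly the claimed constant $[c_1(2c_2+1)]^d$.

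The key step is multiplicativity of operator norms for tensor products between Hilbert spaces. Both $\ell_2$ on the coefficient tensors and $L_2(I^d)$ are Hilbert tensor products, so this multiplicativity holds. I would not simply cite it, but prove it concretely using the mass-matrix formulation already present in the proof of Lemma \ref{lem:discrete}. Write $\|(\widetilde{A}^{(\ell_1)}\otimes\cdots\otimes\widetilde{A}^{(\ell_d)})\aa\|_{L_2(I^d)}^2 = \aa^\transpose (M_1\otimes\cdots\otimes M_d)\aa$, where $M_k$ is the univariate mass matrix $\left(\int_I a^{(\ell_k)}_j a^{(\ell_k)}_i\,dx\right)_{i,j}$. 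This Kronecker structure follows from Fubini, because the integrand factorizes as $\prod_k a^{(\ell_k)}_{j_k}(x_k)a^{(\ell_k)}_{i_k}(x_k)$. Since each $M_k$ is symmetric positive semidefinite, the eigenvalues of the Kronecker product are the products of the eigenvalues of the factors. Hence $\|M_1\otimes\cdots\otimes M_d\|_{2,2}=\prod_k\|M_k\|_{2,2}$. In the proof of Lemma \ref{lem:discrete} we already established $\|M_k\|_{2,2}\le c_1^2(2c_2+1)^2\rho_{\ell_k}h_{\ell_k}$. Taking square roots then yields the bound.

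The only delicate points are bookkeeping. First, the level-$1$ set has one point, so $q_1$ is undefined there. I would handle this by treating the row count of $M_1$ as one directly, so that the convention $\rho_1h_1$ is interpreted as $h_1$ up to constants. Second, I need to be sure that the factorization lemma holds pointwise for all $f\in C(I^d)$, not only after a density argument. I expect this to be unproblematic, since both sides only involve point values of $f$ on $X$ and finitely many functions $a_j^{(\ell_k)}$, so the identity can be checked directly from the explicit double-sum formula for the tensor operator. I regard the Kronecker/Fubini step as the main point, and it is routine.
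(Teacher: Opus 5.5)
Your proposal is correct and follows the same route as the paper: you factor through the sampling tensor via the previous lemma, bound the tensor-product operator norm by the product of the univariate norms, and invoke Lemma \ref{lem:discrete}. The only difference is that you justify the multiplicativity step explicitly through the Kronecker structure $M_1\otimes\cdots\otimes M_d$ of the mass matrix, which the paper simply asserts from Hilbert tensor-product theory; your remark that the one-point level needs the convention $\rho_1=1$ also fills a small gap the paper leaves implicit.
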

\begin{proof}
  From the above considerations, we have for $f\in C(I^d)$
  with $X=X^{(\ell_1)}\times\cdots\times X^{(\ell_d)}$ the bound
  \begin{eqnarray*}
    \| A^{(\ell_1)}\otimes\cdots \otimes A^{(\ell_d)}f\|_{L_2(I^d)} & = &
    \|(\widetilde{A}^{(\ell_1)}\otimes\cdots\otimes
  \widetilde{A}^{(\ell_d)}) \circ ( S^{(\ell_1)}\otimes\cdots\otimes
  S^{(\ell_d)}) (f)\|_{L_2(I^d)} \\
  & \le &     \|(\widetilde{A}^{(\ell_1)}\otimes\cdots\otimes
  \widetilde{A}^{(\ell_d)})\|_{\ell_2(X)\to L_2(I^d)}
  \|f\|_{\ell_2(X)}\\
  & \le & \left(\prod_{k=1}^d
  \|\widetilde{A}^{(\ell_k)}\|_{\ell_2(X^{(\ell_k)})\to L_2(I)}
  \right)\|f\|_{\ell_2(X)}\\
  & \le & \prod_{k=1}^d
  \left(c_1(2c_2+1)(\rho_{\ell_k}h_{\ell_k})^{1/2}\right)\|f\|_{\ell_2(X)}\\
  & = & [c_1(2c_2+1)]^d \prod_{k=1}^d (\rho_{\ell_k}h_{\ell_k})^{1/2} \|f\|_{\ell_2(X)},
  \end{eqnarray*}
  which is the stated bound.
  \end{proof}

This result now yields the required bound on the second term on the
right-hand side of (\ref{gensplit}), noting also that we have a
continuous embedding $H^m_{\mix}(I^d)\subseteq C(I^d)$ for all $m\in\N$.

\begin{theorem}\label{thm:discreteSmolyak}
Assume that the Smolyak operator is built using a local polynomial
reproduction process of degree $m-1$. Assume further that the
univariate point sets are quasi-uniform with $N_1=1$ and
$N_\ell=2^{\ell-1}+1$, $\ell\ge 2$, points and thus $h_\ell\sim
2^{-\ell+1}$. Let $N$ be the number of points 
in the sparse grid $H(p,d)$ with a fixed $p\ge d$. Then, there are
constants $C^{(i)}_d>0$, $i=3,4$ such that
\begin{eqnarray*}
  \|\cA_{Q(p,d)}f \|_{L_2(I^d)} &\le& C^{(3)}_d p^{d-1}2^{(d-p)/2} \|f\|_{\ell_2(H(p,d))}\\
& \le &    C^{(4)}_d (\log N)^{3(d-1)/2} N^{-1/2} \|f\|_{\ell_2(H(p,d))}
\end{eqnarray*}
for all $f\in H^m_{\mix}(I^d)$ and $C^{(3)}_d=\frac{c^d}{(d-1)!}$ for some $c\in (0,\infty)$.
\end{theorem}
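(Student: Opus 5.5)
The bound follows from the explicit combination formula for $\cA_{Q(p,d)}$ together with Lemma \ref{lem:discrete2}, applied term by term. By the triangle inequality,
\[
\|\cA_{Q(p,d)}f\|_{L_2(I^d)} \le \sum_{\eell\in P(p,d)} \binom{d-1}{p-|\eell|} \|(A^{(\ell_1)}\otimes\cdots\otimes A^{(\ell_d)})f\|_{L_2(I^d)} .
\]
Each summand is then bounded by Lemma \ref{lem:discrete2} with $X=X^{(\ell_1)}\times\cdots\times X^{(\ell_d)}\subseteq H(p,d)$, so that $\|f\|_{\ell_2(X)}\le \|f\|_{\ell_2(H(p,d))}$. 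This is the step where the discrete norm over the whole sparse grid enters.

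Next I would use quasi-uniformity. There are constants with $\rho_\ell\le\rho$ and $h_\ell\le\gamma 2^{-\ell+1}$. (For $\ell=1$ we have $N_1=1$, so $q_1$ is degenerate; I would simply treat $\rho_1h_1$ as a bounded constant, or verify Lemma \ref{lem:discrete} directly for a single point, where the mass matrix is $1\times1$ with entry at most $c_1^2|I|$.) Then
\[
\prod_{k=1}^d(\rho_{\ell_k}h_{\ell_k})^{1/2}\le (2\rho\gamma)^{d/2}2^{-|\eell|/2}\le (2\rho\gamma)^{d/2}2^{-(p-d+1)/2},
\]
because $|\eell|\ge p-d+1$ on $P(p,d)$. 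Since $\binom{d-1}{p-|\eell|}\le 2^{d-1}$, it remains to count the indices:
\[
|P(p,d)|=\sum_{k=p-d+1}^{p}\binom{k-1}{d-1}\le d\binom{p-1}{d-1}\le d\,\frac{p^{d-1}}{(d-1)!}.
\]
A cleaner alternative is to sum $\binom{d-1}{j}2^{-(p-j)/2}$ over the levels $|\eell|=p-j$ directly. Collecting all factors, including $[c_1(2c_2+1)]^d$, $2^{d-1}$, $d\le 2^d$ and $(2\rho\gamma)^{d/2}$, gives $C^{(3)}_d p^{d-1}2^{(d-p)/2}$ with $C^{(3)}_d=c^d/(d-1)!$.

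For the second inequality I would reuse (\ref{Nbounds_first}) from the proof of Theorem \ref{thm:contSmolyak}. It gives $p\le c_d\log N$ and $2^{-p}\le 2N^{-1}\binom{p-1}{d-1}$, hence
\[
2^{-p/2}\le \sqrt2\, N^{-1/2}\binom{p-1}{d-1}^{1/2}\le \sqrt2\, N^{-1/2}p^{(d-1)/2}.
\]
Multiplying by $p^{d-1}$ yields $p^{3(d-1)/2}N^{-1/2}$, and hence $(\log N)^{3(d-1)/2}N^{-1/2}$ after inserting $p\le c_d\log N$ and absorbing $2^{d/2}$ and $c_d^{3(d-1)/2}$ into $C^{(4)}_d$.

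The main obstacle is bookkeeping rather than analysis. One has to make sure the constant has the product form $c^d/(d-1)!$; the crude counting bound above delivers this. One also has to handle the degenerate level $\ell=1$, where $\rho_1$ is undefined. Finally, the pointwise evaluation of $f$ on $H(p,d)$ must be legitimate, which holds because of the embedding $H^m_{\mix}(I^d)\subseteq C(I^d)$ noted before the statement.
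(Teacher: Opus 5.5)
Your proposal is correct and follows essentially the same route as the paper: the triangle inequality on the combination formula, Lemma~\ref{lem:discrete2} on each tensor block with $\prod_k(\rho_{\ell_k}h_{\ell_k})^{1/2}\le(2\rho\gamma)^{d/2}2^{-|\eell|/2}$, and then (\ref{Nbounds_first}) to pass to $N$. The differences are only bookkeeping: the paper sums level by level (the ``cleaner alternative'' you mention) to get the slightly sharper $(1+\sqrt{2})^{d-1}\binom{p-1}{d-1}2^{-p/2}$, whereas your cruder bound $2^{d-1}\cdot d\binom{p-1}{d-1}\cdot 2^{-(p-d+1)/2}$ still gives a constant of the form $c^d/(d-1)!$, and you also handle the degenerate level $\ell=1$, which the paper passes over.
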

\begin{proof}
The quasi-uniformity assumption on the data sites shows that there is
a $\rho>0$ such that $\rho_\ell\le \rho$ for all
$\ell\in\N$. Moreover, there is a $\gamma>0$ such that $h_\ell\le
\gamma 2^{-\ell+1}$ for all $\ell\in\N$.

Lemma \ref{lem:discrete2} then yields
\begin{eqnarray*}
\|\cA_{Q(p,d)}f\|_{L_2(I^d)} & \le & \sum_{\eell\in P(p,k) }
  \binom{d-1}{p-|\eell|} \|(A^{(\ell_1)}\otimes\cdots\otimes A^{(\ell_d)})f\|_{L_2(I^d)}\\
  & = & \sum_{j=p-d+1}^p
  \binom{d-1}{p-j}\sum_{|\eell|=j }
  \|(A^{(\ell_1)}\otimes\cdots\otimes A^{(\ell_d)})f\|_{L_2(I^d)}\\
  & \le & \sum_{j=p-d+1}^p
  \binom{d-1}{p-j}\sum_{|\eell|=j } c_1^d(2c_2+1)^d\prod_{k=1}^d
  (\rho_{\ell_k}h_{\ell_k})^{1/2} \|f\|_{\ell_2(H(p,d))}\\
  &\le& [c_1(2c_2+1)(\rho\gamma)^{1/2}]^d \sum_{j=p-d+1}^p
  \binom{d-1}{p-j}\sum_{|\eell|=j } \prod_{k=1}^d 2^{-(\ell_k-1)/2}
  \|f\|_{\ell_2(H(p,d))}\\
  & = & [2^{1/2}c_1(2c_2+1)(\rho\gamma)^{1/2}]^d \sum_{j=p-d+1}^p
  \binom{d-1}{p-j}\binom{j-1}{d-1}  2^{-j/2}
  \|f\|_{\ell_2(H(p,d))},
\end{eqnarray*}
where we have used that the cardinality of $\{\eell\in\N^d :
|\eell|=j\}$ is given by $\binom{j-1}{d-1}$. The monotonicity of the
latter binomial coefficient allows us to conclude
\begin{eqnarray*}
  \sum_{j=p-d+1}^p 
  \binom{d-1}{p-j}\binom{j-1}{d-1}  2^{-j/2} & \le& \binom{p-1}{d-1} \sum_{j=p-d+1}^p
  \binom{d-1}{p-j}  2^{-j/2}\\
    & = & \binom{p-1}{d-1} \sum_{k=0}^{d-1} \binom{d-1}{k}
  2^{(k-p)/2}\\ 
    & =& \binom{p-1}{d-1}2^{-p/2} \sum_{k=0}^{d-1}\binom{d-1}{k} 2^{k/2} \\
  & = & \binom{p-1}{d-1}2^{-p/2} \left(1+2^{1/2}\right)^{d-1}.
\end{eqnarray*}
Plugging this into the above estimate, we obtain
\begin{eqnarray*}
\|\cA_{Q(p,d)}f \|_{L_2(I^d)} &\le&
     [(2^{1/2}+2) c_1 (2c_2+1)(\rho\gamma)^{1/2}]^d
     \binom{p-1}{d-1} 2^{-p/2}  \|f\|_{\ell_2(H(p,d))} \\
     &=:& \widetilde{C}_d^{(3)}     \binom{p-1}{d-1} 2^{-p/2}  \|f\|_{\ell_2(H(p,d))} \\
\end{eqnarray*}
for all $f\in H^m_{\mix}(I^d)$. Using again $\binom{p-1}{d-1}\le
p^{d-1}/(d-1)!$ shows the first stated bound with $C_d^{(3)} =
\widetilde{C}_d^{(3)}2^{-d/2}/(d-1)!$. 
For the second bound, we employ again  $p\le C_d\log N$ and
(\ref{Nbounds}) with $m=1/2$. This leads to

\begin{eqnarray*}
 \|\cA_{Q(p,d)}f\|_{L_2(I^d)}
  & \le & \widetilde{C}_d^{(3)} \binom{p-1}{d-1} 2^{-p/2}
 \|f\|_{\ell_2(H(p,d))}
 \le \widetilde{C_d}^{(3)} \binom{p-1}{d-1}^{3/2} N^{-1/2} 2^{1/2}\\
  &\le &
  \frac{\widetilde{C}_d^{(3)}2^{1/2}}{[(d-1)!]^{3/2}}  p^{3(d-1)/2}N^{-1/2} 
  \|f\|_{\ell_2(H(p,d))}\\
  &\le &
    \frac{\widetilde{C}_d^{(3)}2^{1/2}C_d^{3(d-1)/2}}{[(d-1)!]^{3/2}}
    (\log N)^{3(d-1)/2}N^{-1/2} 
  \|f\|_{\ell_2(H(p,d))}\\
\end{eqnarray*}
which is the desired second bound and defines the constant $C_d^{(4)}$.
\end{proof}

Returning now to our initial goal, 
we can plug the results of the previous two theorems into
(\ref{gensplit}) to derive the following sampling inequality.

\begin{theorem}\label{thm:error1}
Assume that the Smolyak operator is built using a local polynomial
reproduction process of degree $m-1$. Assume further that the
univariate point sets are quasi-uniform with $N_\ell=2^{\ell-1}+1$
points and thus $h_\ell\sim 2^{-i+1}$. Let $N$ be the number of points
in the sparse grid $H(p,d)$ with a fixed $p\ge d$. Then, there is a
constant $C_d>0$ such that
\begin{eqnarray*}
  \|f\|_{L_2(I^d)} &\le&
 p^{d-1}\left[C_d^{(1)} 2^{-(p-d)m}\|f\|_{H^m_{\mix}(I^d)} + C_d^{(3)}
   2^{-(p-d)/2} \|f\|_{\ell_2(H(p,d))}\right]\\ 
  &\le & C_d^{(2)}(\log N)^{(m+1)(d-1)} N^{-m}
\|f\|_{H^m_{\mix}(I^d)} + C_d^{(4)} (\log N)^{3(d-1)/2} N^{-1/2}
\|f\|_{\ell_2(H(p,d))}
\end{eqnarray*}
for all $f\in H^m_{\mix}(I^d)$. Note that both $C_d^{(1)}$ and
$C_d^{(3)}$ are of the form $c^d/(d-1)!$.
\end{theorem}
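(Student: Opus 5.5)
The plan is to start from the triangle inequality (\ref{gensplit}) and bound the two terms on its right-hand side by Theorem \ref{thm:contSmolyak} and Theorem \ref{thm:discreteSmolyak}. Both theorems hold under exactly the present hypotheses: a local polynomial reproduction of degree $m-1$, quasi-uniform univariate sets with $N_\ell=2^{\ell-1}+1$, and $p\ge d$.

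For the approximation error, Theorem \ref{thm:contSmolyak} gives
\[
\|f-\cA_{Q(p,d)}f\|_{L_2(I^d)}\le \|\cE_d-\cA_{Q(p,d)}\|_{H^m_{\mix}(I^d)\to L_2(I^d)}\|f\|_{H^m_{\mix}(I^d)} \le C_d^{(1)}p^{d-1}2^{-(p-d)m}\|f\|_{H^m_{\mix}(I^d)}.
\]
It also gives the logarithmic version $C_d^{(2)}(\log N)^{(m+1)(d-1)}N^{-m}\|f\|_{H^m_{\mix}(I^d)}$. For the second term, Theorem \ref{thm:discreteSmolyak} yields $\|\cA_{Q(p,d)}f\|_{L_2(I^d)}\le C_d^{(3)}p^{d-1}2^{(d-p)/2}\|f\|_{\ell_2(H(p,d))}$, together with its logarithmic version. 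The function $f$ is admissible there because $H^m_{\mix}(I^d)\subseteq C(I^d)$, so the point values on $H(p,d)$ make sense. Note that $2^{(d-p)/2}=2^{-(p-d)/2}$.

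Adding the two estimates and factoring out the common $p^{d-1}$ gives the first line of the claim. Adding the two logarithmic bounds gives the second line.

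There is essentially no obstacle; this is bookkeeping. The only points to check are that the exponent convention $2^{-(p-d)m}$ agrees with the statement of Theorem \ref{thm:contSmolyak}, and that the constants $C_d^{(1)}$ and $C_d^{(3)}$ are the ones already shown there to have the form $c^d/(d-1)!$. If the two theorems produce different bases $c$, we replace both by their maximum.
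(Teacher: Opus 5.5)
Your proposal is correct and matches the paper's argument: the paper gives no separate proof and simply inserts the bounds of Theorems \ref{thm:contSmolyak} and \ref{thm:discreteSmolyak} into the triangle inequality (\ref{gensplit}), factoring out $p^{d-1}$ for the first line and adding the logarithmic versions for the second. Your observation that $H^m_{\mix}(I^d)\subseteq C(I^d)$ makes the point values on $H(p,d)$ meaningful is the one extra check needed. The statement omits the convention $N_1=1$ that both cited theorems assume, so read it as inherited from them rather than as ``exactly the present hypotheses.''
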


\section{Sampling of High-dimensional Functions with Low Effective Dimension}
\label{sec:samphighwithlow}

Given a large space dimension $d\in\N$, we now want to apply the
results of the previous section to specific functions from
$H_{\mix}^m(I^d)$, which are effectively low dimensional, i.e. they
are the sum of functions that depend on fewer than $d$ variables.

To this end, we have to recall the following definitions and
results. Details can be found, for example, in
\cite{Rieger-Wendland-24-1}.

Let $\cP(\cD)$ be the power set of $\cD=\{1,\ldots,d\}$, i.e. the set
of all subsets of $\cD$. A set of subsets $\Lambda\subseteq \cP(\cD)$
is called {\em downward closed} if for any $\mfu\in \Lambda$ and any
$\mfv\in\cP(\cD)$ with $\mfv\subseteq\mfu$ also $\mfv\in \Lambda$ is
given. The complement of $\Lambda$ in $\cP(\cD)$ is denoted by $\complement\Lambda$.

For any $\xx\in I^d$ and $\mfu\in \cD$, we denote the number of
elements in $\mfu$ by $|\mfu|$, define 
$I_{\mfu}:=\bigtimes_{j\in \mfu} I =  I^{|\mfu|}$ and $\xx_\mfu$ as
the vector in $I^{|\mfu|}$ having the entries $(x_j : j\in
\mfu)$. Finally, for a fixed $\cc\in I^d$, we let
$(\xx;\cc)_\mfu\in I^d$ be the point having entries $x_j$ for $j\in \mfu$
and $c_j$ for $j\in\cD\setminus\mfu$. In the same way, we might start
with an $\widetilde{\xx}\in I^{|\mfu|}$ and extend it to a point
$\xx\in I^d$ by setting $x_j=\widetilde{x}_j$ for $j\in \mfu$ and
$x_j=c_j$ for $j\in\cD\setminus \mfu$, which we will also denote with
$\xx=(\widetilde{\xx};\cc)_\mfu$. We will derive sampling inequalities
for point sets on low-dimensional sparse grids.

\begin{definition}\label{def:samplingset}
Let $N_1=1$ and $N_\ell=2^{\ell-1}+1$  for $\ell \ge 2$. For $\ell\in\N$ let
$Y^{(\ell)}\subseteq I$,  be a sequence of  quasi-uniform,  point sets
in $I$ with $|Y^{(\ell)}|=N_\ell$ points. 
For $\emptyset\ne\mfu\subseteq\cD$ and $p\in\N$ with $p\ge |\mfu|$, the {\em
  sparse grid} $\widetilde{X}_{\mfu,p}\subseteq I_\mfu$ based on
the points $\{Y^{(\ell)}\}$ is defined as
\[
\widetilde{X}_{\mfu,p} = \bigcup_{\eell\in P(p,|\mfu|)} 
Y^{(\ell_1)}\times \cdots \times Y^{(\ell_{|\mfu|})}.
\]
In the case of $\mfu=\emptyset$ we set $\widetilde{X}_{\emptyset,p} =
Y^{(p)}$. The points of these low-dimensional sets are extended using the anchor
  $\00\in I^d$, yielding  point sets $X_{\mfu,p}\subseteq I^d$.

For $\pp=\{p_\mfu : \mfu\in \Lambda\}$ with $p_\mfu \ge |\mfu|$,
  a {\em sampling point set for mixed regularity Sobolev 
  $\Lambda$-functions} is given by
  \begin{equation}\label{mixedpoints}
X_{\Lambda,\pp}^{(d)}=\bigcup_{\mfu\in\Lambda} X_{\mfu,p_\mfu}.
\end{equation}
\end{definition}

Note that in contrast to our results in \cite{Rieger-Wendland-24-1},
we do not assume that our univariate point sets $Y^{\ell}$ are
nested. Thus, we need to modify the estimates on the number of points
in $X_{\Lambda,\pp}^{(d)}$ slightly and cannot use the results from
\cite[Proposition 5.5]{Rieger-Wendland-24-1} directly.

\begin{lemma}\label{lem:N}
 Let $\Lambda\subseteq\cP(\cD)$ a downward closed set
 with $n:=\max\{|\mfu| : \mfu\in\Lambda\}\le d$.
   Let $p\in\N$ be fixed. If   $p_{\mfu} = p + |\mfu|$
 for every $\mfu\in\Lambda\}$ then the number of points in
 $X_{\Lambda,\pp}^{(d)}$ can be bounded by
 \begin{equation}\label{eq:boundXLambda}
 2^{p-n}|\Lambda| \le |X_{\Lambda,\pp}^{(d)}| \le 2^{p+n+1}p^{n-1}(1+n)^{n-1} |\Lambda|.
 \end{equation}
In particular, if $\Lambda=\Lambda_n=\{\mfu\subseteq\{1,\ldots,d\} :
|\mfu|\le n\}$ this becomes
 \begin{equation}\label{eq:boundXLambdappd}
2^{p-n}\left(\frac{d}{n}\right)^n\le  |X_{\Lambda,\pp}^{(d)}| \le 2^{p+1}p^{n-1} (4ed)^n.
 \end{equation}
 
\end{lemma}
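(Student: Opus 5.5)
The plan is to count the points of $X_{\Lambda,\pp}^{(d)}$ one $\mfu\in\Lambda$ at a time. For the upper bound we use the union bound over $\mfu$, and for the lower bound we exhibit, for each $\mfu$, points that belong to no other $X_{\mfu',p_{\mfu'}}$.

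\emph{Upper bound.} We have $|X_{\Lambda,\pp}^{(d)}|\le\sum_{\mfu\in\Lambda}|X_{\mfu,p_\mfu}|$, and extension by the anchor does not change cardinalities. For $\mfu\ne\emptyset$ we apply the bound (\ref{Nbounds_first}) from the proof of Theorem \ref{thm:contSmolyak} in dimension $|\mfu|$ with $p_\mfu=p+|\mfu|$. That bound does not use nestedness and relies only on $N_\ell\le 2^\ell-1$. It gives
\[
|X_{\mfu,p_\mfu}|\le 2^{p+|\mfu|+1}\binom{p+|\mfu|-1}{|\mfu|-1}\le 2^{p+n+1}(p+n)^{n-1}\le 2^{p+n+1}p^{n-1}(1+n)^{n-1}.
\]
Here we use $\binom{a}{b}\le a^b$, $|\mfu|\le n$, and $p+n\le p(1+n)$ for $p\ge1$. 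For $\mfu=\emptyset$ the set is $Y^{(p)}$ with $N_p\le 2^p$ points, which is trivially below the same bound. Summing over $\mfu\in\Lambda$ gives the upper bound in (\ref{eq:boundXLambda}).

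\emph{Lower bound.} This is the step I expect to be the main obstacle. Since the $Y^{(\ell)}$ are not nested and may contain the anchor $0$, the sets $X_{\mfu,p_\mfu}$ can overlap. For $\mfu\ne\emptyset$ I will count only the points of $X_{\mfu,p_\mfu}$ whose coordinates indexed by $\mfu$ are all nonzero. All other coordinates equal $0$, so the support of such a point is exactly $\mfu$, and therefore these point families for different $\mfu$ are pairwise disjoint. To make sure they are numerous, I choose the index $\eell=(2,\ldots,2,p-|\mfu|+2)\in\N^{|\mfu|}$. This index satisfies $|\eell|=p_\mfu$, so it lies in $P(p_\mfu,|\mfu|)$, and this requires $p\ge|\mfu|$. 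The first $|\mfu|-1$ factors each contain at least one nonzero point, since $|Y^{(2)}|=2$. The last factor contains at least $2^{p-|\mfu|+1}+1-1$ nonzero points. Hence each $\mfu\ne\emptyset$ contributes at least $2^{p-n}$ points. The set $\mfu=\emptyset$ contributes the point $\00$, or we may use the points of $Y^{(p)}$ directly. If $p<|\mfu|$ for some $\mfu$, I would use $\eell=(1,\ldots,1,p+1)$ instead and compare the anchor value with the single point of $Y^{(1)}$. If that point is $0$, a mild extra assumption may be needed, and I would state it if necessary. Summing over $\mfu$ gives $2^{p-n}|\Lambda|$.

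\emph{Special case $\Lambda=\Lambda_n$.} Here $|\Lambda_n|=\sum_{k=0}^n\binom dk$, and this lies between $\binom dn\ge (d/n)^n$ and $(ed/n)^n$. The latter estimate is the standard bound on partial binomial sums for $n\le d$. Inserting these into (\ref{eq:boundXLambda}) and using $(1+n)^{n-1}\le(2n)^{n-1}$ gives
\[
2^{p+n+1}p^{n-1}(2n)^{n-1}\left(\frac{ed}{n}\right)^n\le 2^{p+1}p^{n-1}(4ed)^n,
\]
which is (\ref{eq:boundXLambdappd}).
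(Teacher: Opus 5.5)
Your upper bound and the specialization to $\Lambda_n$ are essentially the paper's argument: a union bound with (\ref{Nbounds_first}) at level $p_\mfu=p+|\mfu|$, then $(p+n)^{n-1}\le p^{n-1}(1+n)^{n-1}$ and $(d/n)^n\le|\Lambda_n|\le(ed/n)^n$. The lower bound is where you genuinely differ, and your route is the more careful one. The paper simply adds the individual bounds, $|X_{\Lambda,\pp}^{(d)}|\ge\sum_{\emptyset\ne\mfu\in\Lambda}2^{p-|\mfu|}+N_p$, which tacitly treats the sets $X_{\mfu,p_\mfu}$ as pairwise disjoint. That fails as soon as the $Y^{(\ell)}$ contain the anchor coordinate $0$, which is typical for $2^{\ell-1}+1$ quasi-uniform points including the endpoints; then, e.g., $\00$ lies in all of them. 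Restricting to points whose support is exactly $\mfu$ is what makes the summation legitimate, and the index $(2,\ldots,2,p-|\mfu|+2)$ gives an explicit count.

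Two corrections to the bookkeeping. First, $N_2=3$, not $2$. So each factor $Y^{(2)}$ has at least two nonzero points, and you in fact get at least $2^{|\mfu|-1}\cdot 2^{p-|\mfu|+1}=2^p$ points with support exactly $\mfu$ for each nonempty $\mfu$. You need some such surplus. If $\mfu=\emptyset$ contributes only $\00$, then ``$2^{p-n}$ per $\mfu$'' does not sum to $2^{p-n}|\Lambda|$, whereas $2^p(|\Lambda|-1)+1\ge 2^{p-n}|\Lambda|$ holds whenever $|\Lambda|\ge 2$. Your alternative of using the points of $Y^{(p)}$ directly is not safe, since these need not be disjoint from the points with support $\{j\}$.

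Second, your caveat for $p<|\mfu|$ is not an artefact of your method but a genuine defect of the statement. Take $I=[0,1]$, $Y^{(1)}=\{0\}$, $Y^{(2)}=\{0,\frac12,1\}$, $p=1$ and $\Lambda=\Lambda_2$. Every index in $P(3,2)$ has an entry $1$, and one finds $X_{\Lambda,\pp}^{(d)}=\{\00\}\cup\{\frac12\ee_j,\ee_j : 1\le j\le d\}$, which has $2d+1$ points. On the other hand, $2^{p-n}|\Lambda_2|=\frac12\bigl(1+d+\binom{d}{2}\bigr)>2d+1$ for $d\ge 8$. So the lower bound needs $p\ge n$ or $Y^{(1)}\ne\{0\}$. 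Under that assumption your argument proves it, while the paper's does not.
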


\begin{proof}
  We start with \eqref{Nbounds_first} in the form 
\[
 |X_{\mfu,p_\mfu}| \le 2^{p+|\mfu|+1} \binom{p+|\mfu|-1}{|\mfu|-1},
\qquad \mfu \ne \emptyset.
\]
to derive
\begin{eqnarray*}
  |X_{\Lambda,\pp}^{(d)}| &\le& \sum_{\emptyset\ne\mfu \in \Lambda}
  2^{p+|\mfu|+1} \binom{p+|\mfu|-1}{|\mfu|-1} + N_p \\
  & \le & 2^{p+n+1} (p+n)^{n-1} (|\Lambda|-1) + 2^{p-1}+1\\
  & \le & 2^{p+n+1} p^{n-1}(1+n)^{n-1} |\Lambda|,
\end{eqnarray*}
where we have used that $(p+|\mfu|)^{|\mfu|-1} \le (p+n)^{n-1} \le
p^{n-1}(1+n)^{n-1}$, which can easily be seen, using the binomial
expansion of $(p+n)^{n-1}$. This gives our general upper bound from
(\ref{eq:boundXLambda}). The 
lower bound follows in the same way from 
\[
|X_{\Lambda,\pp}^{(d)}| \ge \sum_{\emptyset\ne\mfu\in\Lambda}
2^{p-|\mfu|} + N_p \ge 2^{p-n} |\Lambda|.
\]
For the specific bound for $\Lambda_n$, we can use \cite[Lemma
  2.9]{Rieger-Wendland-24-1}, which states that the cardinality 
  of $\Lambda$ can be bounded by
\[
\left(\frac{d}{n}\right)^n\le  |\Lambda| = \sum_{\ell=0}^n \binom{d}{\ell} \le
\left(\frac{ed}{n}\right)^n.
\]
This yields for the upper bound
\begin{eqnarray*}
|X_{\Lambda,\pp}^{(d)}| &\le&  2^{p+n+1} p^{n-1}(1+n)^{n-1}
\left(\frac{ed}{n}\right)^n
 \le  2^{p+1} p^{n-1} \frac{1}{1+n}
 \left(\frac{2ed(n+1)}{n}\right)^n\\
 & \le & 2^{p+1}p^{n-1}(4ed)^n,
  \end{eqnarray*}
which is the stated upper bound. The lower bound follows similarly.
\end{proof}

Besides the sampling points just introduced, we need a subspace of the
mixed regularity Sobolev functions, i.e. such functions with an
effective low dimension. For a downward closed set  $\Lambda$, this
subspace is defined as follows.

\begin{definition}\label{def:hmmix}
For a downward closed set $\Lambda\subseteq\cP(\cD)$, the space
$H^m_{\mix,\Lambda}(I^d)$ consists of all function $f\in
H^m_{\mix}(I^d)$ having a representation of the form
\[
f(\xx) = \sum_{\mfu \in \Lambda} f_\mfu (\xx_\mfu), \qquad \xx\in I^d,
\]
i.e. $f$ is the sum of lower dimensional functions $f_\mfu$. 
\end{definition}

To understand these spaces better, we recall some
additional, required facts on mixed regularity Sobolev spaces. We can equip
 $H^m(I)$ with the inner product
\[
\langle f, g\rangle_{H^m([0,1])} = \sum_{r=0}^{m-1} f^{(r)}(0)
  g^{(r)}(0) + \int_0^1 f^{(m)}(y) g^{(m)}(y) d y,
  \]
which yields a norm which is equivalent to the standard norm. With
this inner product, $H^m(I)$ is a reproducing kernel Hilbert space
with kernel
\begin{equation}\label{littlek}
  k_m(x,y) =\sum_{r=0}^{m-1}\frac{x^r}{r!}\frac{y^r}{r!} + \int_0^1
  \frac{(x-z)_+^{m-1}}{(m-1)!} \frac{(y-z)_+^{m-1}}{(m-1)!} dz.
\end{equation}
Then standard tensor product theory yields that $H^m_{\mix}(I^d)$ is a
reproducing kernel Hilbert space. Its reproducing kernel can be written
as
\[ 
K_m(\xx,\yy)=\prod_{j=1}^d k_m(x_j,y_j) = \sum_{\mfu\subseteq \cD} K_\mfu(\xx,\yy), \qquad
  K_\mfu(\xx,\yy) = \prod_{j\in\mfu} (k_m(x_j,y_j)-1),
\]
where the product over the empty index set is defined to be $1$. As a
matter of fact, each
$K_{\mfu}$ is the reproducing kernel of some Hilbert space $H_\mfu$
and since $K_m$ satisfies the {\em annihilation property} (see
\cite{Kuo-etal-10-1}), we have the orthogonal decomposition
\[
H_{\mix}^m ([0,1]^d) = \sum_{\mfu\subseteq \cD} H_\mfu.
\]
As we assume $\Lambda$ to be downward closed, the general theory on
projection methods, see again \cite{Kuo-etal-10-1} for details, guarantees that we can
use the anchored components
\begin{equation}\label{fmfu}
f_\mfu(\xx_\mfu) = \sum_{\mfv\subseteq\mfu} (-1)^{|\mfu|-|\mfv|}
f((\xx;\cc)_\mfv)
\end{equation}
in Definition \ref{def:hmmix}.
Using the above inner product and kernel, the next proposition summarizes
further  necessary results on such 
spaces, which are a consequence of \cite[Theorem 3.10]{Rieger-Wendland-24-1}
and \cite[Proposition 6.1]{Rieger-Wendland-24-1}.

\begin{proposition}
 Let $\Lambda\subseteq\cP(\cD)$ be a downward closed set. Then, the
 space $H^m_{\mix,\Lambda}(I^d)$ is a closed subspace of
 $H^m_{\mix}(I^d)$. The space $H^m_{\mix}(I^d)$ has the orthogonal
 decomposition
 \[
 H^m_{\mix}(I^d) = H^m_{\mix,\Lambda}(I^d) \oplus
 H^m_{\mix,\complement\Lambda}(I^d)
 \]
 and the orthogonal projection $P_\Lambda:H^m_{\mix}(I^d)\to
 H^m_{\mix,\Lambda}(I^d)$ is given by
 \[
 P_\Lambda f(\xx) = \sum_{\mfu\in\Lambda}f_\mfu(\xx_\mfu), \qquad
 \xx\in I^d,
 \]
 with $f_\mfu$ from (\ref{fmfu}).
\end{proposition}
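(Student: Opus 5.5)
The plan is to work entirely in the reproducing kernel Hilbert space structure of $H^m_{\mix}(I^d)$ with the anchored inner product, and to reduce every claim to the orthogonal decomposition $H^m_{\mix}(I^d)=\sum_{\mfu\subseteq\cD} H_\mfu$ coming from the annihilation property of $K_m=\sum_\mfu K_\mfu$. I would assume, as in the preceding discussion, that the anchor is $\cc=\00$, so that the constant term $1$ of $k_m$ is exactly the part of the kernel not vanishing at the anchor.

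\textbf{Step 1: identify $H^m_{\mix,\Lambda}(I^d)$.} I will show that
\[
H^m_{\mix,\Lambda}(I^d)=\bigoplus_{\mfu\in\Lambda}H_\mfu .
\]
For ``$\supseteq$'', every element of $H_\mfu$ depends only on $\xx_\mfu$, since $K_\mfu(\cdot,\yy)$ does, and it lies in $H^m_{\mix}(I^d)$. For ``$\subseteq$'', take $f=\sum_{\mfu\in\Lambda} g_\mfu(\xx_\mfu)\in H^m_{\mix}(I^d)$ and decompose $f=\sum_{\mfv\subseteq\cD} f^{(\mfv)}$ with $f^{(\mfv)}\in H_\mfv$. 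The inclusion--exclusion formula (\ref{fmfu}) recovers the $H_\mfv$-component from point evaluations, because the annihilation property gives $f^{(\mfv)}=f_\mfv$. Applied to a function independent of some coordinate $j\in\mfv$, the alternating sum over $\mfv$ telescopes to zero. Each $g_\mfu$ is independent of the variables outside $\mfu$, so $f_\mfv=0$ unless $\mfv\subseteq\mfu$ for some $\mfu\in\Lambda$. Downward closedness then gives $\mfv\in\Lambda$. The one delicate point is that the individual $g_\mfu$ need not lie in $H^m_{\mix}$. I would avoid this by applying (\ref{fmfu}) to $f$ itself, which is in $H^m_{\mix}$, and using only linearity of the alternating sum on the pointwise representation.

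\textbf{Step 2: closedness and the orthogonal decomposition.} Once Step 1 holds, $H^m_{\mix,\Lambda}(I^d)$ is a finite orthogonal sum of the closed subspaces $H_\mfu$, hence closed. Similarly, $H^m_{\mix,\complement\Lambda}(I^d)$ will be shown to equal $\bigoplus_{\mfu\notin\Lambda}H_\mfu$. Since $\complement\Lambda$ is not downward closed, the argument of Step 1 does not transfer directly. I would interpret the complement space as the sum of the $H_\mfu$ with $\mfu\in\complement\Lambda$, following \cite[Theorem 3.10]{Rieger-Wendland-24-1}. The direct orthogonal sum then follows from the mutual orthogonality of the $H_\mfu$ in the full decomposition.

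\textbf{Step 3: the projection.} The orthogonal projection onto $\bigoplus_{\mfu\in\Lambda}H_\mfu$ keeps exactly the components $f^{(\mfu)}$ with $\mfu\in\Lambda$. By the identification $f^{(\mfu)}=f_\mfu$ used in Step 1, this gives $P_\Lambda f=\sum_{\mfu\in\Lambda}f_\mfu$.

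\textbf{Main obstacle.} The crux is the identification $f^{(\mfu)}=f_\mfu$, i.e.\ that the $H_\mfu$-component equals the anchored inclusion--exclusion sum. I would prove it by first computing $K_\mfu(\cdot,\yy)$ evaluated at $(\xx;\00)_\mfv$. Each factor $k_m(x_j,y_j)-1$ vanishes at $x_j=0$, since $k_m(0,y)=1$ from (\ref{littlek}). Hence $g\in H_\mfw$ satisfies $g((\xx;\00)_\mfv)=g(\xx)$ if $\mfw\subseteq\mfv$ and $0$ otherwise. Substituting into (\ref{fmfu}) leaves $\sum_{\mfw\subseteq\mfu}f^{(\mfw)}\sum_{\mfw\subseteq\mfv\subseteq\mfu}(-1)^{|\mfu|-|\mfv|}$. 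The inner alternating sum is $1$ if $\mfw=\mfu$ and $0$ otherwise, which yields $f_\mfu=f^{(\mfu)}$.
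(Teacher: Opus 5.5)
Your proof is correct and follows the route the paper itself relies on. The paper gives no proof of its own: it defers to \cite[Theorem 3.10, Proposition 6.1]{Rieger-Wendland-24-1} and to the annihilation-property theory of \cite{Kuo-etal-10-1} for the splitting $H^m_{\mix}(I^d)=\sum_{\mfu}H_\mfu$. Your inclusion--exclusion computation, which identifies $f_\mfu$ with the $H_\mfu$-component via $k_m(0,y)=1$ and the anchor $\cc=\00$, is that same mechanism made self-contained.

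You are also right that $H^m_{\mix,\complement\Lambda}(I^d)$ must be read as $\bigoplus_{\mfu\notin\Lambda}H_\mfu$. Applying Definition~\ref{def:hmmix} literally to the non-downward-closed $\complement\Lambda$ would give all of $H^m_{\mix}(I^d)$ as soon as $\cD\notin\Lambda$.
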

After this, we are able to state and prove the main result of this
section, a sampling inequality for $H_{\mix,\Lambda}^m(I^d)$ for data
sets as defined in Definition \ref{def:samplingset}.

\begin{theorem}\label{thm:sampLambda}
  Let $\Lambda\subseteq\cP(\cD)$ be a downward closed set with $n:=\max\{|\mfu| : \mfu\in
  \Lambda\}\le n$. For a fixed $p\in\N$ let $\pp=\{p_{\mfu}=p+|\mfu| \in\N
  :\mfu\in\Lambda\}$  and let  $X_{\Lambda,\pp}^{(d)}$ be the sampling
  data set from Definition \ref{def:samplingset} with
  $N=|X_{\Lambda,\pp}^{(d)}|$ points.
  Then, there are constant $\widetilde{C_1}>0$ and
  $\widetilde{C}_{m,n}^{(1)}, \widetilde{C}_{m,n}^{(2)}>0$ such
  that for all $f\in H^m_{\mix,\Lambda}(I^d)$, 
\begin{eqnarray*}
\|f\|_{L_2(I^d)}  &\le& 
(\widetilde{C_1}d)^n p^{n-1} \left[2^{-pm}\|f\|_{H^m_{\mix}(I^d)} +
  2^{-p/2} \|f\|_{\ell_2(X_{\Lambda,\pp}^{(d)})} \right]\\
& \le& 
    \widetilde{C}_{m,n}^{(1)} d^{(m+1)n} (\log N)^{(m+1)(n-1)} N^{-m}
    \|f\|_{H^m_{\mix}(I^d)} \\
    & & \mbox{}
    + \widetilde{C}_{m,n}^{(2)} d^{3n/2} (\log N)^{\frac{3}{2}(n-1)} N^{-1/2}
        \|f\|_{\ell_2(X_{\Lambda,\pp}^{(d)})}.
\end{eqnarray*}
\end{theorem}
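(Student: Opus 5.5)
Since $f\in H^m_{\mix,\Lambda}(I^d)$, I would write $f=\sum_{\mfu\in\Lambda}f_\mfu$ with the anchored components (\ref{fmfu}) at anchor $\cc=\00$. Then
\[
\|f\|_{L_2(I^d)}\le\sum_{\mfu\in\Lambda}\|f_\mfu\|_{L_2(I_\mfu)},
\]
using $|I|\le 1$, or absorbing $|I|^{(d-|\mfu|)/2}$ into the constants. Each $f_\mfu$ is a function of $|\mfu|\le n$ variables lying in $H^m_{\mix}(I_\mfu)$. To each nonempty $\mfu$ I would apply Theorem \ref{thm:error1} in dimension $|\mfu|$ with level $p_\mfu=p+|\mfu|$ on the sparse grid $\widetilde X_{\mfu,p_\mfu}$. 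This gives
\[
\|f_\mfu\|_{L_2(I_\mfu)}\le p_\mfu^{|\mfu|-1}\Big[C^{(1)}_{|\mfu|}2^{-pm}\|f_\mfu\|_{H^m_{\mix}(I_\mfu)}+C^{(3)}_{|\mfu|}2^{-p/2}\|f_\mfu\|_{\ell_2(\widetilde X_{\mfu,p_\mfu})}\Big],
\]
because $2^{-(p_\mfu-|\mfu|)}=2^{-p}$. The constants are of the form $c^{|\mfu|}/(|\mfu|-1)!$ and $p_\mfu^{|\mfu|-1}\le p^{n-1}(1+n)^{n-1}$, so everything is bounded by $c^n p^{n-1}$ up to harmless factors. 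The case $\mfu=\emptyset$ is a constant $f_\emptyset=f(\00)$, which I would handle directly.

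Next I would relate the component norms back to $f$.

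\textbf{Sobolev part.} $f_\mfu$ is the orthogonal projection of $f$ onto $H_\mfu$, restricted to the anchored slice. With the anchored inner product, $\|f_\mfu\|_{H^m_{\mix}(I_\mfu)}\le C^{|\mfu|}\|f\|_{H^m_{\mix}(I^d)}$, where $C$ comes from the norm equivalence in each of the $|\mfu|$ coordinates.

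\textbf{Discrete part.} $f_\mfu(\xx_\mfu)=\sum_{\mfv\subseteq\mfu}(-1)^{|\mfu|-|\mfv|}f((\xx;\00)_\mfv)$. For $\xx_\mfu\in\widetilde X_{\mfu,p_\mfu}$, each point $(\xx;\00)_\mfv$ is a point of $X_{\mfv,\cdot}$ only if the projection of the sparse grid onto the $\mfv$ coordinates, with the other entries set to $0$, lands in $X^{(d)}_{\Lambda,\pp}$. This is the main obstacle. The grids are non-nested and need not contain $0$, so $(\xx;\00)_\mfv$ generally lies in $X_{\mfu,p_\mfu}$ only if $0$ is among the $\mfu\setminus\mfv$ coordinates.

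My first attempt would avoid the decomposition of the data and instead use the sampling inequality on the slice function $g_\mfu(\xx_\mfu):=f((\xx;\00)_\mfu)$. This function is sampled exactly on $X_{\mfu,p_\mfu}\subseteq X^{(d)}_{\Lambda,\pp}$, and $\|g_\mfu\|_{H^m_{\mix}(I_\mfu)}\lesssim C^{|\mfu|}\|f\|_{H^m_{\mix}}$ by the trace/embedding $H^m(I)\hookrightarrow C(I)$ in the frozen coordinates. I would then write $f_\mfu=\sum_{\mfv\subseteq\mfu}(-1)^{|\mfu|-|\mfv|}g_\mfv$ and apply Theorem \ref{thm:error1} to each $g_\mfv$ rather than to $f_\mfu$. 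This gives
\[
\|f\|_{L_2}\le\sum_{\mfu\in\Lambda}\sum_{\mfv\subseteq\mfu}\|g_\mfv\|_{L_2(I_\mfv)}\le 2^n|\Lambda|\max_{\mfv\in\Lambda}\|g_\mfv\|_{L_2},
\]
using downward closedness so that every $\mfv$ appearing is in $\Lambda$. Also $\|g_\mfv\|_{\ell_2(X_{\mfv,p_\mfv})}\le\|f\|_{\ell_2(X^{(d)}_{\Lambda,\pp})}$. Using $|\Lambda|\le(ed/n)^n$ then produces the first displayed bound with $(\widetilde{C_1}d)^n p^{n-1}$. If the final constants should instead depend on $d$ only through $d^n$, this route gives exactly that.

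Finally, for the second inequality I would use Lemma \ref{lem:N}. From the lower bound $N\ge 2^{p-n}(d/n)^n$, or $N\ge 2^{p-n}|\Lambda|$, I get $2^{-p}\le 2^{-n}N^{-1}|\Lambda|$, hence
\[
2^{-pm}\le C^n d^{mn}N^{-m},\qquad 2^{-p/2}\le C^n d^{n/2}N^{-1/2}.
\]
From the same bound, $p\le\log_2 N+n$, so $p^{n-1}\le C_n(\log N)^{n-1}$. Combining with the prefactor $d^n p^{n-1}$ gives powers $d^{(m+1)n}$ and $d^{3n/2}$. The logarithmic exponents in the statement, $(m+1)(n-1)$ and $\tfrac32(n-1)$, are larger than the $(n-1)$ this route gives. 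I would reach them either by crude majorization, since $\log N\ge1$, or by mirroring the proof of Theorem \ref{thm:contSmolyak} via (\ref{Nbounds}). Either way the constants $\widetilde C^{(i)}_{m,n}$ depend only on $m,n$.
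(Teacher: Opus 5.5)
Your argument for the first inequality is the paper's. After noticing that the components $f_\mfu$ cannot be sampled on $X^{(d)}_{\Lambda,\pp}$, you apply Theorem~\ref{thm:error1} to the slices $f((\cdot;\00)_\mfv)$, which are sampled exactly on $X_{\mfv,p_\mfv}$. You then use $p_\mfv-|\mfv|=p$ and count $\sum_{\mfu\in\Lambda}2^{|\mfu|}\le 2^n(ed/n)^n$, as the paper does.

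One caution on that step. To keep the constant of the form $(\widetilde{C_1}d)^n$, you need $\|f((\cdot;\00)_\mfv)\|_{H^m_{\mix}(I_\mfv)}\le\|f\|_{H^m_{\mix}(I^d)}$ with a $d$-independent constant. In the anchored norm this holds with constant one, since the slice's norm is a sub-sum of the norm of $f$; the paper uses this implicitly. A trace argument in the standard norm instead costs one embedding constant per \emph{frozen} coordinate. That gives $C^{d-|\mfv|}$, not $C^{|\mfv|}$.

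The step to the second inequality contains a genuine error. From $N\ge 2^{p-n}|\Lambda|$ you infer $2^{-p}\le 2^{-n}|\Lambda|N^{-1}$, but the inequality is reversed. A lower bound on $N$ gives $2^{-p}\ge 2^{-n}|\Lambda|N^{-1}$; its only use here is $p\le\log_2N+n$, which you derive correctly. To bound $2^{-p}$ by a multiple of $N^{-1}$, you need the upper bound \eqref{eq:boundXLambdappd}, which applies since $\Lambda\subseteq\Lambda_n$:
\[
2^{-p}\le 2(4ed)^np^{n-1}N^{-1},\qquad
2^{-pm}\le 2^m(4ed)^{mn}p^{m(n-1)}N^{-m},\qquad
2^{-p/2}\le 2^{1/2}(4ed)^{n/2}p^{(n-1)/2}N^{-1/2}.
\]
This is what the paper does.

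The factor $p^{n-1}$ cannot be dropped. A single classical $n$-dimensional sparse grid of level $p+n$ (e.g.\ with nested dyadic $Y^{(\ell)}$) already has of order $2^pp^{n-1}$ points. So your intermediate bound $2^{-pm}\le C^nd^{mn}N^{-m}$ fails for $n\ge 2$ and large $p$. Your remark that the stated logarithmic exponents exceed what the argument gives is therefore also wrong.

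Multiplying the correct bounds by the prefactor $(\widetilde{C_1}d)^np^{n-1}$ and using $p\le c_n\log N$ yields exactly $d^{(m+1)n}(\log N)^{(m+1)(n-1)}$ and $d^{3n/2}(\log N)^{\frac32(n-1)}$. The exponents in the statement are what this proof produces, not slack to be reached by majorization. Your fallback, mirroring Theorem~\ref{thm:contSmolyak} via \eqref{Nbounds}, is the correct route, because \eqref{Nbounds} rests on the upper bound for the number of points.
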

\begin{proof}
 Let $c_I:= \vol(I)^n$.   For any $\mfv\in \Lambda$ we will shortly
 write $X_{\mfv}$ for $X_{\mfv,p_{\mfv}}$. Then,   Theorem
 \ref{thm:error1}  yields for $\mfv\ne\emptyset$ with $|\mfv|\le n$,
  \begin{eqnarray*}
    \|f((\cdot;\cc)_\mfv)\|_{L_2(I^d)} & \le & c_{I}
    \|f((\cdot;\cc)_{\mfv})\|_{L_2(I_\mfv)} \\
    &\le &
     c_{I} p_\mfv^{|\mfv| -1}\left [ C_{|\mfv|}^{(1)} 2^{-(p_{\mfv}-|\mfv|)m}
      \|f((\cdot;\cc)_\mfv)\|_{H^m_{\mix}(I_\mfv)} + C_{|\mfv|}^{(3)} 2^{-(p_\mfv
        -|\mfv|)/2}\|f\|_{\ell_2(X_\mfv)}\right]\\
    & = & c_{I}  c^{|\mfv|} \frac{(|\mfv|+p)^{|\mfv|-1}}{(|\mfv|-1)!}
    \left[  2^{-pm}\|f((\cdot;\cc)_{\mfv})\|_{H^m_{\mix}(I_\mfv)} +
      2^{-p/2} \|f\|_{\ell_2(X_\mfv)}\right]\\
    &\le & c_I c^{n} (n+p)^{n-1}\left[ 2^{-pm}\|f((\cdot;\cc)_{\mfv})\|_{H^m_{\mix}(I_\mfv)} +
      2^{-p/2} \|f\|_{\ell_2(X_\mfv)}\right], 
  \end{eqnarray*}
  where we have used that the constants $C_{|\mfv|}^{(i)}$, $i=1,3$
  can be written in the form $c_i^{|\mfv|}/(|\mfv|-1)!$ and  have set
  $c=\max\{c_1,c_3\}$. Noting that a similar estimate holds in the
  case of $\mfv=\emptyset$, this gives
  \begin{eqnarray*}
\|f\|_{L_2(I^d)} & \le & c_{I} \sum_{\mfu\in
  \Lambda}\sum_{\mfv\subseteq\mfu} 
\|f((\cdot;\cc)_{\mfv})\|_{L_2(I_\mfv)} \\
&\le & c_I c^{n}
(n+p)^{n-1}\sum_{\mfu\in\Lambda}\sum_{\mfv\subseteq\mfu}
\left[ 2^{-pm}\|f((\cdot;\cc)_{\mfv})\|_{H^m_{\mix}(I_\mfv)} +
  2^{-p/2} \|f\|_{\ell_2(X_\mfv)}\right]\\
&\le & c_I c^{n}
(n+p)^{n-1}
\left[ 2^{-pm}\|f\|_{H^m_{\mix}(I^d)} +
      2^{-p/2} \|f\|_{\ell_2(X)}\right]
\sum_{\mfu\in\Lambda}\sum_{\mfv\subseteq\mfu} 1.
%
  \end{eqnarray*}
  Using again
  \[
  \sum_{\mfu\in\Lambda}\sum_{\mfv\subseteq\mfu} 1 = \sum_{\ell=0}^n
  \sum_{|\mfu|=\ell} \sum _{\mfv\subseteq \mfu} 1 = 
  \sum_{\ell=0}^n \binom{d}{\ell}2^\ell \le 2^n
  \left(\frac{ed}{n}\right)^n
  \]
  finally leads to
  \begin{eqnarray*}
    \|f\|_{L_2(I^d)} &\le& c_Ic^n (n+p)^{n-1} 2^n
    \left(\frac{ed}{n}\right)^n 
\left[ 2^{-pm}\|f\|_{H^m_{\mix}(I^d)} +
      2^{-p/2} \|f\|_{\ell_2(X)}\right]\\
&\le&    \left(4\vol(I)ced)\right)^{n} p^{n-1}
  \left[2^{-pm}\|f\|_{H^m_{\mix}(I^d)} + 
    2^{-p/2}\|f\|_{\ell_2(X)}\right],
  \end{eqnarray*}
  where in the last step we used estimates as in the proof of Lemma
  \ref{lem:N}. This shows the first estimate with $\widetilde{c}_n=4\vol(I)ce$.

  Next, by the definition of $n$, we have $\Lambda\subseteq\Lambda_n$,
  thus we can use the upper bound from
  (\ref{eq:boundXLambdappd}) for our $\Lambda$. For the lower bound, we use
  that $|\Lambda|\ge 1$. This means we have altogether
  \[
  2^{p-n}  \le N \le 2^{p+1}p^{n-1} (4ed)^n.
  \]
  
  This leads on the one hand to $p\le c_n \log N$, where we can choose
  $c_n=2/\log 2$ again if $N\ge 2^n$ which is a reasonable assumption
  in this situation, and 
  \[
  2^{-p/2} \le 2^{1/2}(4e)^{n/2} d^{n/2} p^{(n-1)/2} N^{-1/2} \le
  2^{1/2}(4e)^{n/2} c_n^{(n-1)/2} 
  (\log N)^{(n-1)/2}
  N^{-1/2}
  \]
  and on the other hand to
  \[
  2^{-pm} \le 2^m p^{m(n-1)} (4ed)^{mn} N^{-m} \le 2^mc_n^{m(n-1)}
  (4ed)^{mn} (\log N)^{m(n+1)}N^{-m} 
  \]
  Inserting this in the above bound leads to the stated estimate with
  \begin{eqnarray*}
    \widetilde{C}_{m,n}^{(1)} & = & \widetilde{C}_1^n c_n^{(m+1)(n-1)},
    2^m (4e)^{mn} \\
    \widetilde{C}_{m,n}^{(2)} & = &\widetilde{C}_1^n
    c_n^{3(n-1)/2}2^{1/2} (4e)^{n/2}.
  \end{eqnarray*}
\end{proof}

\section{Approximation of Mismeasured functions}

In this final section, we want to apply the results of the previous section
to the following situation. Assume we have a target function $f\in
H_{\mix}^m(I^d)$ which can be decomposed into $f=f_\Lambda +
f_{\complement\Lambda}$ with a dominant part $f_\Lambda=P_\Lambda f
\in H^m_{\mix,\Lambda}(I^d)$. Then, the goal is to approximately reconstruct such an
$f$ from its measurements on a sampling data set $X_{\Lambda,\pp}^{(d)}$
using a representation from $H^m_{\mix,\Lambda}(I^d)$. This is a
continuation of the discussion we started in
\cite{Rieger-Wendland-26-1} and provides a significant improvement
over the results achieved in that paper.

The reconstruction process that we will employ is a penalized
least-squares approach, see \cite{Wahba-90-1}. However, in contrast to
the standard approach, we do not minimize the penalized least-squares
functional over the entire function space but only over a closed
subspace. To be more precise, in general terms, let $H$ be a Hilbert space of functions
$f:\Omega\subseteq\R^d\to\R$  and let $H_1$ be a closed subspace of $H$.  For a data set
$X=\{\xx_1,\ldots,\xx_N\}\subseteq \Omega$, observations
$f(X)=(f(\xx_1),\ldots, f(\xx_N))^\transpose\in \R^N$ of an unknown
function $f\in H$, and a regularization parameter $\lambda>0$, we define
\begin{eqnarray*}
  J_{X,\lambda,f(X)}(s)&:=& \sum_{j=1}^N
  |f(\xx_j)-s(\xx_j)|^2 + \lambda \|s\|_H^2, \qquad s\in H,\\
  Q_{X,\lambda,H_1}(f)&:=&\argmin_{s\in H_1}
  J_{X,\lambda,f(X)}(s).
\end{eqnarray*}
In this general situation, the following result was proven in
\cite{Rieger-Wendland-26-1}. 

\begin{theorem}
  Let $H$ be a reproducing kernel Hilbert space of functions
$f:\Omega\to\R$ with reproducing kernel $K$. Let $H_1$ be a closed
subspace with reproducing kernel
$K_1$ and let $H_2$ be the orthogonal complement to $H_1$ in $H$,
i.e. $H=H_1\oplus H_2$. Assume that there is
a constant $C>0$ and two  functions $F_1,F_2:\N\to [0, \infty)$  such that on $H_1$ a {\em
sampling inequality} of the form 
\begin{equation}\label{genericsamplinginequality}
\|f_1\|_{L_2(\Omega)} \le C \left[ F_1(N) \|f_1\|_H + F_2(N)\|f_1\|_{\ell_2(X)}\right], \qquad
f_1\in H_1, 
\end{equation}
holds. Then, the error between any $f=f_1+f_2\in H$ with $f_i\in H_i$,
$i=1,2$ and $Q_{X,\lambda,H_1}(f)$ can be bounded by
\begin{eqnarray*}
\|f-Q_{X,\lambda,H_1} f\|_{L_2(\Omega)} &\le&
\|f_2\|_{L_2(\Omega)} + C
\left[2F_1(N)+ F_2(N)\sqrt{\lambda}\right]\|f\|_H\\
& & \mbox{} +
  C \left(\frac{F_1(N)}{\sqrt{\lambda}} +2F_2(N)\right) \|f_2\|_{\ell_2(X)}.
\end{eqnarray*}
The particular choice $\sqrt{\lambda}=F_1(N)/F_2(N)$ of the
regularization parameter thus gives the bound
 \[
  \|f-Q_{X,\lambda,H_1}(f)\|_{L_2(\Omega)} \le \|f_2\|_{L_2(\Omega)} + 
  3C \left[F_1(N) \|f\|_H + F_2(N) \|f_2\|_{\ell_2(X)}\right].
  \]
\end{theorem}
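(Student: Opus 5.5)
The plan is to split the error by the triangle inequality and handle the subspace component with the sampling inequality. Write $s:=Q_{X,\lambda,H_1}(f)\in H_1$. Then
\[
\|f-s\|_{L_2(\Omega)}\le \|f_2\|_{L_2(\Omega)}+\|f_1-s\|_{L_2(\Omega)} .
\]
Since $f_1-s\in H_1$, the sampling inequality (\ref{genericsamplinginequality}) gives
\[
\|f_1-s\|_{L_2(\Omega)}\le C\left[F_1(N)\|f_1-s\|_H+F_2(N)\|f_1-s\|_{\ell_2(X)}\right].
\]
It therefore remains to bound $\|f_1-s\|_H$ and $\|f_1-s\|_{\ell_2(X)}$ in terms of $\|f\|_H$ and $\|f_2\|_{\ell_2(X)}$.

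Both bounds will come from the minimality of $s$ in the functional $J_{X,\lambda,f(X)}$ over $H_1$. Because $f_1$ is an admissible competitor,
\[
\|f(X)-s(X)\|_2^2+\lambda\|s\|_H^2\le J_{X,\lambda,f(X)}(f_1)=\|f_2\|_{\ell_2(X)}^2+\lambda\|f_1\|_H^2 .
\]
Two consequences follow. First, $\|f-s\|_{\ell_2(X)}\le \|f_2\|_{\ell_2(X)}+\sqrt\lambda\|f_1\|_H$. Second, $\|s\|_H\le \|f_1\|_H+\lambda^{-1/2}\|f_2\|_{\ell_2(X)}$.

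Orthogonality of the decomposition gives $\|f_1\|_H\le\|f\|_H$. Using this, we get
\[
\|f_1-s\|_H\le 2\|f\|_H+\lambda^{-1/2}\|f_2\|_{\ell_2(X)} .
\]
For the discrete term we use $f_1-s=(f-s)-f_2$, which gives
\[
\|f_1-s\|_{\ell_2(X)}\le 2\|f_2\|_{\ell_2(X)}+\sqrt\lambda\|f\|_H .
\]
Inserting both bounds into the sampling inequality and collecting the coefficients of $\|f\|_H$ and of $\|f_2\|_{\ell_2(X)}$ yields exactly the first claimed estimate. Existence of the minimizer is standard: it is a strictly convex, coercive quadratic functional on the closed subspace $H_1$.

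For the particular choice $\sqrt\lambda=F_1(N)/F_2(N)$, we have $F_2\sqrt\lambda=F_1$ and $F_1/\sqrt\lambda=F_2$. The coefficients therefore become $3F_1(N)$ and $3F_2(N)$, which gives the second bound. The only delicate point is the discrete residual. Applying the triangle inequality to $f_1-s=(f-s)-f_2$ is what produces the factor $2$ in front of $F_2(N)$, and we must not bound $\|f_1-s\|_{\ell_2(X)}$ via $H$-norms, since that would lose the data dependence.
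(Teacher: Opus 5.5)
Your proof is correct and complete: comparing $J_{X,\lambda,f(X)}(s)\le J_{X,\lambda,f(X)}(f_1)$, applying the sampling inequality to $f_1-s\in H_1$, and bounding $\|f_1-s\|_H$ and $\|f_1-s\|_{\ell_2(X)}$ by the triangle inequality gives exactly the stated coefficients $2F_1(N)+F_2(N)\sqrt{\lambda}$ and $F_1(N)/\sqrt{\lambda}+2F_2(N)$. The paper gives no proof of this theorem and only cites \cite{Rieger-Wendland-26-1}; your argument is presumably the one used there, since the constants match exactly, and the final choice of $\lambda$ tacitly needs $F_1(N),F_2(N)>0$, as the statement itself does.
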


Applying this theorem to the situation of the last section means that
we now choose $\Omega=I^d$, $H=H_{\mix}^m(I^d)$, equipped with the
inner product outlined in the last section, and
$H_1=H^m_{\mix,\Lambda}(I^d)$ with a  downward closed
$\Lambda\subseteq\cP(\cD)$ with $n=\max_{\mfu\in\Lambda}|\mfu|$.
Then, Theorem \ref{thm:sampLambda} shows that we have
\begin{eqnarray*}
  F_1(N) & = & (\log N)^{\rho_1(m,n)} N^{-m}, \qquad \rho_1(m,n) =
  (m+1)(n-1),\\
  F_2(N) & = & (\log N)^{\rho_2(n)}N^{-1/2}, \qquad \rho_2(n) =
  \frac{3}{2}(n-1),
\end{eqnarray*}
yielding the following result, where we also use the notation
$Q_{X,\lambda,\Lambda}$ instead of $Q_{X,\lambda,H_1}$ to better
emphasize the subspace.

\begin{corollary}\label{cor:mismatch1}
Let $\Lambda\subseteq\cP(\cD)$ be a downward closed set with
$n=\max_{\mfu\in\Lambda} |\mfu|$. For a fixed $p\in\N$ let
$\pp=\{p_{\mfu}=p+|\mfu| \in\N
:\mfu\in\Lambda \}$  and let
$X=X_{\Lambda,\pp}^{(d)}$ be the sampling data set from Definition \ref{def:samplingset}.
Then, there is a constant $C>0$, depending on $d$ and $m\in\N$, such
that for all $f=f_1+f_2\in H^m_{\mix}(I^d)$ with $f_1\in
H^m_{\mix,\Lambda}(I^d)$ and $f_2\in
H^m_{\mix,\complement\Lambda}(I^d)$ the bound
\begin{eqnarray*}
  \|f-Q_{X,\lambda,\Lambda}(f)\|_{L_2(I^d)} & \le &  C \left[(\log
    N)^{\rho_1(m,n)} N^{-m} + (\log N)^{\rho_2(n)} N^{-1/2}
    \sqrt{\lambda}\right] \|f\|_{H^m_{\mix}(I^d)} \\
  &  &  \mbox{} + C\left[ (\log N)^{\rho_1(m,n)}
    N^{-m}\frac{1}{\sqrt{\lambda}} + (\log N)^{\rho_2(n)}
    N^{-1/2}\right]\|f_2\|_{\ell_2(X_{\Lambda,\pp}^{(d)})} \\
  & & \mbox{} + \|f_2\|_{L_2(I^d)} 
\end{eqnarray*}
holds. The particular choice $\sqrt{\lambda}=F_1(N)/F_2(N)$ yields
\begin{eqnarray*}
\|f-Q_{X,\lambda,\Lambda}(f)\|_{L_2(I^d)} &\le&
C(\log N)^{\rho_1(m,n)} N^{-m} \|f\|_{H^m_{\mix}(I^d)}\\
&& \mbox{}  + C (\log
  N)^{\rho_2(n)}
  N^{-1/2}\|f_2\|_{\ell_2(X_{\Lambda,\pp}^{(d)})} +  \|f_2\|_{L_2(I^d)}\\
& \le & C\left[(\log N)^{\rho_1(m,n)} N^{-m} \|f\|_{H^m_{\mix}(I^d)} +
  (\log N)^{\rho_2(n)} \|f_2\|_{L_\infty(I^d)}\right].
\end{eqnarray*}
\end{corollary}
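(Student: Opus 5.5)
The corollary follows by applying the general mismatch theorem from \cite{Rieger-Wendland-26-1}, quoted just above, with $\Omega=I^d$, $H=H^m_{\mix}(I^d)$ carrying the anchored inner product, $H_1=H^m_{\mix,\Lambda}(I^d)$ and $H_2=H^m_{\mix,\complement\Lambda}(I^d)$.

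First we verify the hypotheses. The proposition preceding Theorem \ref{thm:sampLambda} says that $H_1$ is closed and that $H=H_1\oplus H_2$ is an orthogonal decomposition. Hence every $f\in H^m_{\mix}(I^d)$ splits as $f=f_1+f_2$ with $f_1=P_\Lambda f$. The space $H$ is a reproducing kernel Hilbert space with kernel $K_m$, and $H_1$ inherits a reproducing kernel as a closed subspace. The sampling inequality (\ref{genericsamplinginequality}) on $H_1$ is the second estimate of Theorem \ref{thm:sampLambda}. There we take $F_1(N)=(\log N)^{\rho_1(m,n)}N^{-m}$ and $F_2(N)=(\log N)^{\rho_2(n)}N^{-1/2}$. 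The constant $C$ is the maximum of $\widetilde C^{(1)}_{m,n}d^{(m+1)n}$ and $\widetilde C^{(2)}_{m,n}d^{3n/2}$, which depends on $d$, $m$ (and $n\le d$), as the statement allows.

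One point needs care. Theorem \ref{thm:sampLambda} is stated with the norm $\|\cdot\|_{H^m_{\mix}(I^d)}$, whereas the abstract theorem uses $\|\cdot\|_H$ for the anchored inner product. These two norms are equivalent: the univariate norms are equivalent, and the equivalence constant tensorizes to at most $c^d$. The equivalence constant is therefore absorbed into $C$. Substituting everything into the first bound of the abstract theorem gives the first displayed estimate, after enlarging $C$ to absorb the factors $2$.

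For the second part, we choose $\sqrt\lambda=F_1(N)/F_2(N)$. The abstract theorem then gives the middle bound with $3C$, and we rename this constant $C$.

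The last inequality needs $N^{-1/2}\|f_2\|_{\ell_2(X)}\le\|f_2\|_{L_\infty(I^d)}$. This holds because $\|f_2\|_{\ell_2(X)}^2=\sum_{\xx\in X}|f_2(\xx)|^2\le N\|f_2\|_{L_\infty}^2$, where $f_2$ is continuous by the embedding $H^m_{\mix}\subseteq C(I^d)$. We also need $\|f_2\|_{L_2(I^d)}\le \vol(I)^{d/2}\|f_2\|_{L_\infty(I^d)}$. Finally, $(\log N)^{\rho_2(n)}\ge c>0$ once $N\ge 2^n\ge 3$, so the bare $L_\infty$ term is dominated by a constant multiple of $(\log N)^{\rho_2(n)}\|f_2\|_{L_\infty}$. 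We handle the degenerate case $n=1$, where $\rho_2=0$, by the same constant.

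The main obstacle is the bookkeeping of constants. We must make sure that the norm equivalence and the implicit assumption $N\ge 2^n$ from Theorem \ref{thm:sampLambda} are legitimately absorbed into a single $C$ depending only on $d$ and $m$. The rest is direct substitution.
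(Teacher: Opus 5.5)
Your proposal is correct and follows the paper's route: the corollary comes from inserting the second estimate of Theorem \ref{thm:sampLambda}, with $F_1(N)=(\log N)^{\rho_1(m,n)}N^{-m}$ and $F_2(N)=(\log N)^{\rho_2(n)}N^{-1/2}$, into the abstract mismatch theorem for $H_1=H^m_{\mix,\Lambda}(I^d)\subseteq H^m_{\mix}(I^d)$. Your extra bookkeeping fills in steps the paper leaves implicit: the equivalence of the anchored and standard $H^m_{\mix}$ norms, and the bounds $\|f_2\|_{\ell_2(X)}\le N^{1/2}\|f_2\|_{L_\infty(I^d)}$ and $\|f_2\|_{L_2(I^d)}\le \vol(I)^{d/2}\|f_2\|_{L_\infty(I^d)}$ needed for the last line.
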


We will end this section with a short discussion on the requirements
on $f_2$ of $f$ so that we can successfully employ this theory. To
this end, it is more informative to discuss everything in terms of
the relevant parameters $p$ and $n$, rather than $N$. Thus, we rewrite
the error estimate in the last corollary using the first estimate from
Theorem \ref{thm:sampLambda} as the assumed sampling inequality, i.e.
\[
\|f\|_{L_2(I^d)}  \le 
(\widetilde{C_1}d)^n p^{n-1} \left[2^{-pm}\|f\|_{H^m_{\mix}(I^d)} +
  2^{-p/2} \|f\|_{\ell_2(X_{\Lambda,\pp}^{(d)})} \right], \qquad f\in H^m_{\mix,\Lambda_n}(I^d).
\]
Then, with the appropriate choice of $\lambda$, the 
considerations above lead to an error estimate of the form  
\[
\|f-Q_{X,\lambda,\Lambda_n} f\|_{L_2(I^d)}  \le  \|f_2\|_{L_2(I^d)} + 3 (\widetilde{C}_1d)^n
p^{n-1} \left[ 2^{-pm}\|f\|_{H^m_{\mix}(I^d)} + 2^{-p/2}
\|f_2\|_{\ell_2(X_{\Lambda,\pp}^{(d)})}\right].
\]
Using Lemma \ref{lem:N}, assuming for simplicity that $\vol(I)=1$ and
bounding the norms on $f_2$ uniformly leads to 
\begin{eqnarray*}
  \lefteqn{  \|f-Q_{X,\lambda,\Lambda_n}f\|_{L_2(I^d)}}\\
  & \le&
\|f_2\|_{L_\infty(I^d)} + 3 (\widetilde{C}_1d)^n
p^{n-1}  2^{-pm}\|f\|_{H^m_{\mix}(I^d)} 
 +  3 (\widetilde{C}_1d)^n p^{n-1}   2^{p/2+1}p^{n-1} (4ed)^n
\|f_2\|_{L_\infty(I^d)}\\
&\le & 3(\widetilde{C}_1 d)^n p^{n-1}2^{-pm} \|f\|_{H^m_{\mix}(I^d)} +
\left[1+3(4\widetilde{C}_1 ed^2)^np^{2n-2} 2^{p+1}\right]\|f_2\|_{L_\infty(I^d)}.
\end{eqnarray*}

If we now, for the time being, ignore the term $\|f\|_{H^m_{\mix}(I^d)}$ and  want to
bound the first term in the last expression by a given $\epsilon>0$,
i.e. if we want to have $3(\widetilde{C}_1 d)^n
p^{n-1}2^{-pm}<\epsilon$ then we see that we need
\[
p\ge \frac{(n-1) \log (\widetilde{C}_1 d p )
  -\log(\epsilon/(3\widetilde{C}_1 d))}{m\log 2},
\]
which essentially means that we need to choose $p=\alpha n\log n$ with
a sufficiently large $\alpha>0$.

More precisely, considering the special case $m=1$, we have for  $n\ge 2$ and
$p=\alpha n \log_2 n$ with sufficiently large $\alpha>0$ the bound
\begin{equation}\label{alphareq}
\frac{\widetilde{C}_1 d \alpha\log_2 n }{n^{\frac{\alpha}{2} -1}} \le
\frac{\widetilde{C}_1 d \alpha}{n^{\frac{\alpha}{2}-2}} \le 
\frac{\widetilde{C}_1 d \alpha}{2^{\frac{\alpha}{2} -2}} \le 1.
\end{equation}
This means that the first term in the above estimate can, under the
assumption $p=\alpha n\log_2 n$, be further bounded by
\begin{eqnarray*}
  3(\widetilde{C}_1 d)^n p^{n-1}2^{-p} & = & 3(\widetilde{C}_1d)^n
  (\alpha n\log_2 n)^{n-1} n^{-\alpha  n} = \frac{3}{\alpha n \log_2
    n} (\widetilde{C}_1 d \alpha n^{1-\alpha} \log_2n)^n\\
  & \le& \frac{3}{\alpha n^{\frac{\alpha}{2}+1} \log_2 n},
\end{eqnarray*}
while the second term satisfies the bound
\begin{eqnarray*}
  1+3(4\widetilde{C}_1 ed^2)^np^{2n-2} 2^{p+1}  &\le& 
  7(4\widetilde{C}_1ed^2)^n (\alpha n \log_2 n)^{2n-2} n^{\alpha n}
   =  7 \frac{(4\widetilde{C}_1^2 e d^2\alpha^2 n^{2+\alpha} \log_2^2
     n)^n}{\widetilde{C}_1^n (\alpha n \log_2 n)^2}\\
   & \le & 7 \frac{(4e n^{\alpha-2} n^{2+\alpha})^n}{\widetilde{C}_1^n
     (\alpha n \log_2 n)^2} = 7 \frac{(4e
     n^{2\alpha})^n}{\widetilde{C}_1^n (\alpha n \log_2 n)^2}.
\end{eqnarray*}
Hence,  in the case of $m=1$ we have the following result. For
arbitrary $m\in\N$ we could proceed similarly but leave the details
for the reader as we are only interested in the case $m=1$ in what follows.

\begin{corollary}\label{cor:mismatch2}
  Under the assumptions of Corollary
  \ref{cor:mismatch1} with $m=1$ let $p=\alpha n \log_2 n$ with $\alpha>0$
  sufficiently large. Then, 
\[
  \|f-Q_{X,\lambda,\Lambda_n}f\|_{L_2(I^d)}  \le 
  \frac{3}{\alpha n^{\frac{\alpha}{2}+1} \log_2 n}  \|f\|_{H^1_{\mix}(I^d)} +
 \frac{7 (4 e n^{2\alpha})^n}{\widetilde{C}_1^n (\alpha n\log_2 n)^2}\|f_2\|_{L_\infty(I^d)}.
\]
\end{corollary}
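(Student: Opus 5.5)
The plan is to start from the $p$-dependent error bound derived just before the statement, namely
\[
\|f-Q_{X,\lambda,\Lambda_n}f\|_{L_2(I^d)} \le 3(\widetilde{C}_1 d)^n p^{n-1}2^{-pm} \|f\|_{H^m_{\mix}(I^d)} + \left[1+3(4\widetilde{C}_1 ed^2)^np^{2n-2} 2^{p+1}\right]\|f_2\|_{L_\infty(I^d)}.
\]
This bound comes from three ingredients. The first is the general theorem from \cite{Rieger-Wendland-26-1} with the choice $\sqrt{\lambda}=F_1/F_2$. The second is the first (non-logarithmic) inequality of Theorem \ref{thm:sampLambda}, used as the sampling inequality on $H_1=H^m_{\mix,\Lambda}(I^d)$. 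The third is the estimate $\|f_2\|_{\ell_2(X)}\le N^{1/2}\|f_2\|_{L_\infty(I^d)}$, combined with the upper bound of Lemma \ref{lem:N} for $N$ and normalised so that $\vol(I)=1$. I set $m=1$ and substitute $p=\alpha n\log_2 n$, so that $2^{-p}=n^{-\alpha n}$ and $2^{p}=n^{\alpha n}$. (If $p$ must be an integer, I would round up; the constants change only harmlessly.) Each coefficient then has to be rewritten as an $n$-th power divided by a correction factor.

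For the first coefficient, I rewrite
\[
3(\widetilde{C}_1 d)^n p^{n-1}n^{-\alpha n}=\frac{3}{\alpha n\log_2 n}\bigl(\widetilde{C}_1 d\,\alpha\, n^{1-\alpha}\log_2 n\bigr)^n .
\]
I then use the calibration (\ref{alphareq}) to say that "$\alpha$ sufficiently large" means $\widetilde{C}_1 d\alpha\log_2 n\le n^{\alpha/2-1}$. This uses $\log_2 n\le n$ and $n\ge2$, and $\alpha$ may depend on $d$ through $\widetilde{C}_1 d$. With this, the base is at most $n^{-\alpha/2}$. Its $n$-th power is then at most $n^{-\alpha/2}$, since $n\ge 1$ and the base is below $1$. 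This gives exactly $3/(\alpha n^{\alpha/2+1}\log_2 n)$.

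For the second coefficient, I first absorb the $1$ into the other term; this is possible because that term is at least $6$, so the sum is at most $7/6$ of it. This yields $7(4\widetilde{C}_1 e d^2)^n(\alpha n\log_2 n)^{2n-2}n^{\alpha n}$. I write this as
\[
\frac{7\,(4\widetilde{C}_1^2 e d^2\alpha^2 n^{2+\alpha}\log_2^2 n)^n}{\widetilde{C}_1^n(\alpha n\log_2 n)^2}.
\]
I then apply the same condition squared, $(\widetilde{C}_1 d\alpha\log_2 n)^2\le n^{\alpha-2}$. This turns the base into $4e\,n^{2\alpha}$, which gives the stated second coefficient.

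The main obstacle is purely bookkeeping. A single admissibility condition on $\alpha$ must serve both terms. This works because the second term uses only the square of the inequality used in the first. I would also check that the direction of each inequality is correct: the first base must be at most $1$ before raising it to the $n$-th power. Finally, the combination of the constant $2$ from $2^{p+1}$ and the factor $3$ must indeed stay below $7$.
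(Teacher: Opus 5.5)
Your proposal is correct and follows the paper's argument essentially step for step. You use the same starting bound, the same rewriting of both coefficients as $n$-th powers, the condition (\ref{alphareq}) on $\alpha$ for the first term and its square for the second, and the same absorption of the $1$ to produce the factor $7$. One small inconsistency is harmless: your stated ingredient $\|f_2\|_{\ell_2(X)}\le N^{1/2}\|f_2\|_{L_\infty(I^d)}$ yields a smaller coefficient than your displayed starting bound, which corresponds to the paper's cruder use of $N\|f_2\|_{L_\infty(I^d)}$ and $2^{p/2}\le 2^{p}$, so that bound still holds a fortiori.
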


Obviously, this means that the term $\|f_2\|_{L_\infty(I^d)}$ needs to
decay super exponentially to cope with the term $n^{2\alpha n}$. Also,
the assumption (\ref{alphareq}) can possibly be further optimized.

A typical application of this set-up comes from uncertainty
quantification. As this is not the primary goal of our paper, we will
not review the details but refer the reader to
\cite{Dick-etal-04-1,Kaarnioja-etal-20-1,Kuo-etal-12-1,Owen-13-1,Rieger-Wendland-26-1,Sloan-Wozniakowski-98-1,
  Sloan-etal-04-1}. For us, the only important thing is that in this
context, one tries to approximate a function from a weighted mixed
regularity Sobolev space
$
H_{{\ggamma}}^1(I^d)=\left\{ f\in L_2(I^d) : \| f\|_{H_{\ggamma}}<\infty \right\},
$
which is equipped with the norm
\[
\|f\|^2_{H_{\ggamma}^1(I^d)} = \sum_{\mfu \subseteq \cD}
\ggamma^{-1}_{\mfu}\|D^{\mfu} f((\cdot;\00)_{\mfu})\|_{L_2(I_\mfu)}^2.
\]
It is well-known that this norm is equivalent to the standard norm
$\|\cdot\|_{H^1_{\mix}(I^d)}$, see for example \cite[Theorem
  3.3]{Rieger-Wendland-26-1}. 
Moreover,  if we choose
$\Lambda=\Lambda_n$ and set $f_2=\sum_{|\mfu|\ge n+1} f_\mfu$ for an
$f\in H^1_{\mix}(I^d)$ then \cite[Corollary 3.7]{Rieger-Wendland-26-1}
yields
\begin{equation}\label{estf2}
\|f_2\|_{L_\infty(I^d)} \le C_{emb}\left( \sum_{|\mfu|\ge n+1}
2^{|\mfu|/2} \gamma_{\mfu}^{1/2}\right)
\|f\|_{H_{\ggamma}^1(I^d)},
\end{equation}
where $C_{emb}>0$ is the embedding constant from embedding
$H^{1}_{\mix}(I^d)$ into $L_\infty(I^d)$. Next, as usual, we will
assume that the weights have product form, i.e. $\ggamma_{\mfu} 
=\prod_{j\in\mfu}\gamma_j$. Then, the following prototype of error
estimate for $f_2$ can be derived, assuming super exponential decay of
the coefficients.

\begin{lemma}
 If under the above assumptions, the weights are given by $\gamma_j =
 \frac{1}{2}e^{-6j^2}$, $j\in\N_0$ then, the remaining term
 $f_2=\sum_{|\mfu|\ge n+1} f_\mfu \in H^1_{\mix,\complement \Lambda_n}(I^d)$ satisfies the bound
 \[
 \|f_2\|_{L_\infty(I^d)} \le C' e^{-n^3} \|f\|_{H^1_{\ggamma}(I^d)}
 \]
   with a constant $C'>0$ independent of $n$. 
\end{lemma}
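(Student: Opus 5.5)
Starting from (\ref{estf2}), it suffices to show
\[
S_n:=\sum_{|\mfu|\ge n+1} 2^{|\mfu|/2}\ggamma_\mfu^{1/2}\le C'' e^{-n^3}
\]
with $C''$ independent of $n$ (and of $d$). Then $C'=C_{emb}C''$. With product weights $\ggamma_\mfu=\prod_{j\in\mfu}\gamma_j$ and $\gamma_j=\frac12 e^{-6j^2}$, each factor becomes
\[
2^{1/2}\gamma_j^{1/2}=e^{-3j^2},
\]
which removes the factor $2^{|\mfu|/2}$ entirely. So
\[
S_n=\sum_{|\mfu|\ge n+1}\prod_{j\in\mfu}e^{-3j^2}=\sum_{|\mfu|\ge n+1}e^{-3\sum_{j\in\mfu}j^2}.
\]

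The key step is a lower bound on $\sum_{j\in\mfu}j^2$ for $|\mfu|=k$. Take the indices in $\mfu\subseteq\cD=\{1,\ldots,d\}$ (all $j\ge1$). Any $k$ distinct positive integers have squares summing to at least
\[
\sum_{j=1}^k j^2=\frac{k(k+1)(2k+1)}{6}\ge \frac{k^3}{3}.
\]
For a tail estimate I would not use this crude bound directly on every set. Instead I would bound the sum over all sets of size $k$ by an elementary symmetric sum, then use the product structure:
\[
\sum_{|\mfu|=k}\prod_{j\in\mfu}e^{-3j^2}\le e^{-3\sum_{j=1}^k j^2}\sum_{|\mfu|=k}\prod_{j\in\mfu}e^{-3(j^2-\cdots)}.
\]
A cleaner route is this. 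Order $\mfu=\{j_1<\cdots<j_k\}$, so that $j_i\ge i$. Then
\[
j_i^2=(j_i-i+i)^2\ge i^2+(j_i-i)^2 .
\]
Consequently
\[
\prod_i e^{-3j_i^2}\le e^{-3\sum_i i^2}\prod_i e^{-3(j_i-i)^2},
\]
where $t_i:=j_i-i$ is a nondecreasing sequence of nonnegative integers. Summing over all nondecreasing sequences is at most $\prod_{i=1}^k\sum_{t\ge0}e^{-3t^2}$, which is not uniform in $k$. The better bound is to note that the number of nondecreasing sequences with $\sum_i t_i^2=s$ is at most the number of partitions of $s$, so the total is bounded by $\sum_s p(s)e^{-3s}$. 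This is finite, since $p(s)\le e^{\pi\sqrt{2s/3}}$, and it gives an absolute constant $A$.

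Hence the level-$k$ sum is at most $A\,e^{-k(k+1)(2k+1)/2}\le A e^{-k^3}$. Summing over $k\ge n+1$ gives $S_n\le A\sum_{k\ge n+1}e^{-k^3}\le A' e^{-(n+1)^3}\le A' e^{-n^3}$, where the geometric-type tail is dominated by its first term.

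The main obstacle is making the counting over subsets uniform in $d$ and $k$. The naive product bound $\prod_j(1+e^{-3j^2})$ controls the total sum but not the tail decay. If the partition argument feels heavy, I would try first the simpler generating-function trick:
\[
S_n\le e^{-\theta(n+1)}\prod_{j\ge1}\bigl(1+e^{\theta}e^{-3j^2}\bigr)\quad\text{for any }\theta>0 .
\]
This alone yields only exponential decay in $n$. That is why the superlinear lower bound $\sum_i i^2\gtrsim k^3$ has to be exploited through the ordering argument above.
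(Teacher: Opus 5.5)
Your proof is correct. Its skeleton is the paper's:
\begin{itemize}
\item reduce to $\mu_j=(2\gamma_j)^{1/2}=e^{-3j^2}$;
\item order $\mfu=\{j_1<\cdots<j_k\}$ and write $j_i=i+t_i$ with $0\le t_1\le\cdots\le t_k$;
\item factor out $e^{-3\sum_{i\le k}i^2}=e^{-k^3-3k^2/2-k/2}$;
\item bound the remaining shift sum by a constant;
\item sum the super-exponential tail over $k\ge n+1$.
\end{itemize}
The difference lies in how the shift sum is controlled.

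The paper keeps the cross term, $\mu_{i+t_i}/\mu_i=e^{-3(2it_i+t_i^2)}$, so the decay improves with $i$. This lets it drop the monotonicity constraint and factor the sum as $\prod_{i}\sum_{t\ge0}e^{-3(2it+t^2)}$. Each factor is $1+O(e^{-6i})$, so the infinite product converges.

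You discard the cross term via $(i+t)^2\ge i^2+t^2$, which forces you to keep monotonicity. Your observation is right: a nondecreasing sequence of fixed length is determined by the multiset of its positive entries, and squaring those entries gives a partition of $s=\sum_i t_i^2$. This bounds the shift sum by $\sum_s p(s)e^{-3s}$, or directly by $\prod_{t\ge1}(1-e^{-3t^2})^{-1}$, uniformly in $k$ and $d$. Both routes are elementary: the paper's needs no counting, yours needs no geometric-series estimate.

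Uniformity in $k$ is not actually needed. The crude bound you rejected, $\bigl(\sum_{t\ge0}e^{-3t^2}\bigr)^k<(1.05)^k\le e^{k/2}$, is absorbed by the factor $e^{-3k^2/2-k/2}$ you already extracted. So the level-$k$ sum is at most $e^{-k^3}$ directly, and the partition step can be dropped, making your argument shorter than the paper's.

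In a write-up, delete the unfinished display containing $e^{-3(j^2-\cdots)}$ and the closing remark on the generating-function trick; neither contributes to the proof.
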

\begin{proof}
Starting with (\ref{estf2}),  we can continue our bound by estimating
\[
\sum_{|\mfu|\ge n+1} 2^{|\mfu|/2} \gamma_{\mfu} = \sum_{k=n+1}^d
2^{k/2} \sum_{|\mfu|=k} \prod_{j\in \mfu} \gamma_j^{1/2}
= \sum_{k=n+1}^d \sum_{|\mfu|=k}\prod_{j\in \mfu}  \mu_j
\]
with $\mu_j = (2\gamma_j)^{1/2}$.

Next, if we the order the set  $\mfu=\{u_1,\ldots, u_k\}$ in the form
$u_1<u_2<\cdots <u_k$ and use $u_j\ge j$ for $1\le j\le k$, we can
write this in the form $u_j=j+\delta_j$ with $\delta_j\ge 0$. Thus, we
have
\[
\sum_{|\mfu|=k} \prod_{j\in \mfu} \mu_j  = \left(\prod_{j=1}^k
  \mu_j\right) \left( 1+ \sum_{\substack{ |\mfu|=k \\ \mfu\ne
      \{1,\ldots,k\}}}\prod_{j=1}^k \frac{\mu_{u_j}}{\mu_j}\right)
  = \left(\prod_{j=1}^k \mu_j\right) \left(1+ \sum_{0\le \delta_1\le
    \delta_2\le\cdots \le \delta_k} \prod_{j=1}^k
  \frac{\mu_{j+\delta_j}}{\mu_j}\right). 
  \]
By assumption, the weights decay super exponentially in the form
$\mu_j=\exp(-3j^2)$, which leads to 
\begin{eqnarray*}
\sum_{0\le \delta_1\le
    \delta_2\le\cdots \le \delta_k} \prod_{j=1}^k
\frac{\mu_{j+\delta_j}}{\mu_j}  &=&
\sum_{0\le \delta_1\le
    \delta_2\le\cdots \le \delta_k} \prod_{j=1}^k
e^{-3(2j\delta_j+\delta_j^2)}
\le \sum_{\ddelta\in\N_0^k} \prod_{j=1}^k
e^{-3(2j\delta_j+\delta_j^2)} \\
& = &  \prod_{j=1}^k \sum_{\delta=0}^\infty e^{-3(2j\delta+\delta^2)}
= \prod_{j=1}^k \left(1+ \sum_{\delta=1}^\infty\left(
e^{-6j}\right)^\delta e^{-3\delta^2}\right)\\
& \le& \prod_{j=1}^k \left(1+e^{-3}
\frac{e^{-6j}}{1-e^{-6j}}\right)
\le \prod_{j=1}^\infty(1+e^{-3-6j}) =:C.
\end{eqnarray*}
Plugging this all into the above estimate (\ref{estf2}), we arrive at
\begin{eqnarray*}
\|f_2\|_{L_\infty(I^d)} &\le& C_{emb}(1+C) \sum_{k=n+1}^d \prod_{j=1}^k e^{-3j^2}
\|f\|_{H_{\ggamma}^1(I^d)} \\
& = & C_{emb}(1+C) \sum_{k=n+1}^d e^{-k^3}e^{-3k^2/2}e^{-k/2}
\|f\|_{H_{\ggamma}^1(I^d)} \\
& \le & C' e^{-n^3} \|f\|_{H^1_{\ggamma}(I^d)},
\end{eqnarray*}
which is the desired estimate.
\end{proof}

As we obviously have $\|f\|_{H^1_{\mix}(I^d)} \le
\|f\|_{H^1_{\ggamma}(I^d)}$ for all $f\in H^1_{\mix}(I^d)$ we can combine
this lemma with Corollary \ref{cor:mismatch2} to derive our final
bound
\[
  \|f-Q_{X,\lambda,\Lambda_n}f\|_{L_2(I^d)}  \le \left(
  \frac{3}{\alpha n^{\frac{\alpha}{2}+1} \log_2 n}  +
 \frac{7C' (4 e n^{2\alpha})^n}{\widetilde{C}_1^n (\alpha n\log_2 n)^2}
 e^{-n^3}\right) \|f\|_{H^1_\ggamma(I^d)}.
\]

\section*{Acknowledgment}
This work was funded by the Deutsche Forschungsgemeinschaft (DFG, German
Research Foundation) –Projektnummer 452806809.

\end{document}